\documentclass[11pt,reqno]{amsart}
\usepackage[utf8]{inputenc}
\usepackage{lmodern} % 字体优化
\usepackage{amsmath,amssymb,amsfonts,amsthm,latexsym}
\usepackage{graphicx,multirow,enumerate}
\usepackage{tikz}
\usepackage{array}
\usepackage{booktabs}
\usepackage{longtable}
\usetikzlibrary{calc}
\usepackage{booktabs} 
\usepackage{float}
\usepackage{caption}
\usepackage{array}      % 表格控制
\usepackage{cellspace}  % 表格间距控制
\usepackage{mathrsfs}   % 花体字母
\usepackage[all]{xy}    % 交换图
\usepackage{color,xcolor} % 颜色
\usepackage{cancel}     % 删除线
\usepackage{url}        % 网址支持
\usepackage{longtable}
\usepackage[hidelinks]{hyperref}
\theoremstyle{plain}
\newtheorem{thm}{Theorem}[section]

\newtheorem{prop}[thm]{Proposition}

\theoremstyle{definition}

\newtheorem{defn}[thm]{Definition}
\newtheorem{rem}[thm]{Remark}

\newtheorem{conjecture}{Conjecture}[section]

\begin{document}

\title{Classification of Novikov-Poisson Algebras and Their Applications}

    \author{Ziyi Zhang}
	\address{School of Mathematics and Statistics, Northeast Normal University, Changchun 130024, Jilin, China}
	\email{zhangziyi@nenu.edu.cn}
	
	\author{Zeyu Hao}
	\address{School of Mathematics and Statistics, Northeast Normal University, Changchun 130024, Jilin, China}
	\email{haozeyu@nenu.edu.cn}
	
	\author{Yining Sun}
	\address{School of Mathematics and Statistics, Northeast Normal University, Changchun 130024, Jilin, China}
	\email{yiningsun@nenu.edu.cn}
	
	\author{Liangyun Chen*}
	\address{School of Mathematics and Statistics, Northeast Normal University, Changchun 130024, Jilin, China}
	\email{chenly640@nenu.edu.cn}
	\thanks{Corresponding author: Liangyun Chen.}
	\thanks{This work is supported by the NNSF of China (No.~12271085).}	
	
\begin{abstract}
In this paper, we give a complete classification, up to isomorphism, of 3-dimensional complex Novikov--Poisson algebras. As an application of the classification, we further prove that every 3-dimensional complex transposed Poisson algebra can be obtained from a Novikov--Poisson algebra except the Lie algebra $\mathfrak{sl}_2(\mathbb C)$. Consequently, Sartayev's conjecture holds in dimension 3, that is, every 3-dimensional complex transposed Poisson algebra is special when regarded as a GD algebra.
\end{abstract}

\keywords{Novikov--Poisson algebra, transposed Poisson algebra, Gelfand--Dorfman algebra, low-dimensional classification, special algebra}
\subjclass[2020]{17A30, 17B63}

\maketitle

\section{Introduction}\label{introduction}

The classification, up to isomorphism, of $n$-dimensional algebras satisfying a prescribed family of polynomial identities is a classical problem in the theory of associative and nonassociative algebras.  Complete low-dimensional classifications have been obtained for many important varieties.  These include complex 5-dimensional nilpotent symmetric Leibniz algebras \cite{AlvarezKaygorodov2021}, 6-dimensional nilpotent anticommutative algebras \cite{KaygorodovKhrypchenkoLopes2020}, complex 4-dimensional symmetric Leibniz algebras, complex 4-dimensional nilpotent terminal algebras \cite{KaygorodovKhrypchenkoPopov2021}, complex 3-dimensional right alternative and semi-alternative algebras \cite{AbdelwahabKaygorodovLubkovRight2026}, and complex 3-dimensional metabelian commutative, derived commutative associative, derived Jordan,  bicommutative algebras \cite{AbdelwahabKaygorodovLubkovDerived2026}.

Novikov algebras arose from the study of Poisson brackets of hydrodynamic type and Hamiltonian operators \cite{Balinskii-Novikov,Gelfand-Dorfman}.  Their two defining identities combine left symmetry with right commutativity.  Xu introduced Novikov--Poisson algebras and their tensor theory in \cite{Xu1996} and subsequently developed their structure in \cite{Xu1997}.  The 2-dimensional Novikov--Poisson algebras were first classified in \cite{ZhaoBaiMeng}.  More recently, their geometric classification were studied in \cite{AbdelwahabFernandezKaygorodov2025}. Finite-dimensional simple Novikov--Poisson algebras over fields of characteristic different from \(2\) were determined in \cite{ZakharovSimple2023}.  Further work developed unified products and extending structures for Novikov--Poisson algebras \cite{BaoHong}, and established a bialgebra theory for Novikov--Poisson algebras \cite{SunZhuLi}.  In characteristic zero, every Novikov--Poisson algebra embeds into a commutative conformal algebra with a derivation \cite{Kolesnikov}.

Transposed Poisson algebras were first introduced in \cite{Bai}, where they were defined as a dual notion to Poisson algebras by exchanging the roles of the associative product and the Lie bracket in the Leibniz rule. Research on transposed Poisson algebras has gained significant attention in recent years. Transposed Poisson
algebra structures on simple Lie algebras were studied in \cite{FernandezOuaridi2024,FernandezOuaridi2026}, and the classification of complex \(3,4\)-dimensional transposed Poisson algebras was obtained in
\cite{BeitesFernandezKaygorodov,Yuan,Zhang-Z,Zhang4D}. Moreover, further progress has been made in the study of transposed Poisson structures on specific algebraic frameworks. By building the connection between
\(\frac12\)-derivations of Lie algebras and transposed Poisson algebras \cite{Ferreira}, descriptions of all transposed Poisson structures on certain Lie algebras have been obtained. These include the Witt algebra, the Virasoro algebra and the \(W(a,b)\) algebra \cite{Ferreira}, Block Lie algebras and superalgebras \cite{Kaygorodov-Block}, solvable Lie algebras with filiform nilradical \cite{Abdurasulov-Adashev-Eshmeteva}, Galilean and solvable Lie algebras \cite{Kaygorodov-Lopatkin-Zhang}, and other classes of Lie algebras.
Further developments include Hom and BiHom versions \cite{LaraiedhSilvestrov2021,MaLi2023}, and the recent study of nilpotency and Frattini theory for transposed Poisson algebras \cite{JinHong2026}.

Gelfand--Dorfman (GD) algebras originated in the study of Hamiltonian operators and Hamiltonian pairs in integrable systems \cite{Gelfand-Dorfman,Dorfman1993}.  A GD algebra carries a Novikov product and a Lie bracket satisfying a compatibility identity.  Xu related GD superalgebras to quadratic Lie conformal superalgebras \cite{Xu2000} and constructed GD algebras from classical $R$-matrices \cite{Xu2002}.  A standard
construction starts from a differential Poisson algebra.  A GD algebra is called special if it embeds into an algebra obtained by this construction.  Kolesnikov, Sartayev and Orazgaliev exhibited two independent special identities of degree four \cite{Kolesnikov-Sartayev-Orazgaliev}.  Kolesnikov and Sartayev proved that all
special identities of degree at most five follow from them, while it remains open whether they generate all special identities \cite{Kolesnikov-Sartayev}.  The relation with Novikov--Poisson algebras was further clarified in \cite{Kolesnikov}, where it was shown that every commutator GD algebra (known as a transposed Poisson algebra) obtained from a Novikov--Poisson algebra is special. Free special GD algebras were studied in
\cite{Gubarev-Sartayev}.  Recently, nilpotency, solvability, constructions of special and non-special GD algebras, GD structures on simple Lie algebras, and the complete classification of complex GD algebras in dimensions 2 and 3 were studied in \cite{ZhangGD2026}, where the speciality of the
classified algebras was also determined. The cohomology, deformations and inducibility problems for GD algebras were also studied in \cite{GDCohomology}.

The three kinds of algebras above are linked in two direct ways.  First, the commutator of the Novikov product in a Novikov--Poisson algebra, together with its commutative product, defines a transposed Poisson algebra \cite{Bai}. Second, every transposed Poisson algebra becomes a GD algebra when its commutative associative product is regarded as the Novikov product \cite{Sartayev}.  The first connection raises the question of which transposed Poisson algebras can be obtained in this way.  A transposed Poisson algebra is called special, if it can be obtained from a Novikov--Poisson algebra \cite{BeitesFernandezKaygorodov}. The special transposed Poisson algebras in dimension 2 were studied in \cite{BeitesFernandezKaygorodov}. These results are part of the broader study of low-dimensional transposed Poisson algebras. Complete classifications of complex transposed Poisson algebras in dimensions 3 and 4 are now available \cite{BeitesFernandezKaygorodov,Zhang-Z,Zhang4D}. These low-dimensional classifications motivate us to study the speciality of 3-dimensional transposed Poisson algebras.  For this purpose, we classify the 3-dimensional complex Novikov--Poisson algebras. Using our classification of 3-dimensional Novikov--Poisson algebras, we construct the associated transposed Poisson algebras and establish their isomorphism correspondence with the existing classification.  This proves that every 3-dimensional transposed Poisson algebra other than $\mathfrak{sl}_2(\mathbb C)$ is special.

On the GD side, the speciality problem is a central topic.  A GD algebra is called special if it can be embedded into a GD algebra induced by a differential Poisson algebra.  Kolesnikov and Sartayev proved that every 2-dimensional GD algebra is special \cite{Kolesnikov-Sartayev}.  Our recent classification of complex 3-dimensional GD algebras contains both special and non-special classes \cite{ZhangGD2026}, so speciality is not automatic for arbitrary 3-dimensional GD algebras.  Sartayev nevertheless conjectured that every transposed Poisson algebra is special when regarded as a GD algebra \cite{Sartayev}. This conjecture was also included as Question~30 in the survey on transposed Poisson algebras \cite{BeitesStructures}.  Since constructing such embeddings is generally difficult, we use the fact that every transposed Poisson algebra obtained from a Novikov--Poisson algebra is special when regarded as a GD algebra \cite{Kolesnikov}.  Based on the above results, we prove that every 3-dimensional transposed Poisson algebra, regarded as a GD algebra, is special.  Consequently, Sartayev's conjecture that every transposed Poisson algebra is special as a GD algebra holds in dimension 3.

The paper is organized as follows. Section~\ref{sec:method} gives a complete classification of 3-dimensional Novikov--Poisson algebras over \(\mathbb C\).  Section~\ref{sec:construction} applies this classification to determine the speciality of 3-dimensional transposed Poisson algebras.  It proves that every such algebra except \(\mathfrak{sl}_2(\mathbb C)\) is special.  Based on these results, Section~\ref{sec:gd_conjecture} proves that every 3-dimensional transposed Poisson algebra is special as a GD algebra and hence establishes
Sartayev's conjecture in dimension 3.

Throughout this paper, unless otherwise stated, all vector spaces and algebras are assumed to be finite-dimensional over $\mathbb C$.

	\section{The classification of low-dimensional Novikov-Poisson algebras}\label{sec:method}
	 In this section, we first recall the definitions and results used below. Then we present the methodology for classifying low-dimensional Novikov-Poisson algebras and classify the 3-dimensional Novikov-Poisson algebras over $\mathbb{C}$.

A \emph{Novikov algebra} is a vector space with a bilinear product $\circ$ satisfying, for all $x,y,z$,
\begin{align}
(x\circ y)\circ z-x\circ(y\circ z)
  &=(y\circ x)\circ z-y\circ(x\circ z),\label{eq:novikov-left-symmetric}\\
(x\circ y)\circ z&=(x\circ z)\circ y.\label{eq:novikov-right-commutative}
\end{align}

\begin{defn}\cite{Xu1997}
A \emph{Novikov--Poisson algebra} is a triple $(A,\cdot,\circ)$ in which $\cdot$ and $\circ$ are bilinear operations on $A$ such that
\begin{enumerate}
    \item $(A,\cdot)$ is a commutative associative algebra
    \item $(A,\circ)$ is a Novikov algebra
    \item the compatibility identities
    \begin{align}
        (x\cdot y)\circ z&=x\cdot(y\circ z),\label{eq:NP3}\\
        (x\circ y)\cdot z-(y\circ x)\cdot z
        &=x\circ(y\cdot z)-y\circ(x\cdot z)\label{eq:NP4}
    \end{align}
    hold for all $x,y,z\in A$.
\end{enumerate}
\end{defn}

	\begin{defn}\cite{BaoHong}
Let \((A_1,\cdot_1,\circ_1)\) and \((A_2,\cdot_2,\circ_2)\) be two Novikov--Poisson algebras. They are said to be isomorphic if there exists a linear isomorphism \(\varphi:A_1\to A_2\) such that, for all \(x,y\in A_1\),
\begin{equation*}
        \varphi(x\cdot_1 y)=\varphi(x)\cdot_2\varphi(y),
        \qquad
        \varphi(x\circ_1 y)=\varphi(x)\circ_2\varphi(y).
\end{equation*}
\end{defn}

\begin{defn}
Let \((A,\cdot)\) be a commutative associative algebra. Denote by \(\mathcal N_{\mathrm{NP}}(A,\cdot)\) the set of all bilinear maps \(\theta:A\times A\to A\) satisfying, for all \(x,y,z\in A\),
\begin{align*}
\theta(x,\theta(y,z))-\theta(\theta(x,y),z)
&=
\theta(y,\theta(x,z))-\theta(\theta(y,x),z), \\
\theta(\theta(x,y),z)
&=
\theta(\theta(x,z),y), \\
\theta(x\cdot y,z)
&=
x\cdot\theta(y,z), \\
\theta(x,y)\cdot z-\theta(y,x)\cdot z
&=
\theta(x,y\cdot z)-\theta(y,x\cdot z).
\end{align*}
\end{defn}

\begin{prop}
If $\theta \in \mathcal N_{\mathrm{NP}}(A,\cdot)$ and we define a bilinear product $\circ_\theta : A \times A \to A$ by
\[ 
x \circ_\theta y := \theta(x, y) ,\quad \text{for all } x, y \in A. 
\]

Then the triple $(A, \cdot, \circ_\theta)$ carries the structure of a Novikov-Poisson algebra.

Conversely, if $(A, \cdot, \circ)$ is a Novikov-Poisson algebra, the bilinear map $\theta : A \times A \to A$ defined by $\theta(x, y) := x \circ y$, then $\theta \in \mathcal N_{\mathrm{NP}}(A,\cdot)$, and the Novikov-Poisson algebra $(A, \cdot, \circ)$ is isomorphic to $(A, \cdot, \circ_\theta)$.

\end{prop}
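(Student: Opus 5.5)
This proposition is a direct translation between the two languages: the four defining identities of $\mathcal N_{\mathrm{NP}}(A,\cdot)$ are, term by term, the identities \eqref{eq:novikov-left-symmetric}, \eqref{eq:novikov-right-commutative}, \eqref{eq:NP3} and \eqref{eq:NP4}, written with $\theta(x,y)$ in place of $x\circ y$. The plan is therefore to substitute the definitions and match identities one at a time. No case analysis or computation is needed.

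For the first direction, fix $\theta\in\mathcal N_{\mathrm{NP}}(A,\cdot)$ and set $x\circ_\theta y=\theta(x,y)$. Condition (1) of the definition of a Novikov--Poisson algebra is inherited from the hypothesis that $(A,\cdot)$ is commutative associative. For condition (2), rewrite the first identity of $\mathcal N_{\mathrm{NP}}$ by moving terms across. It reads
\[
\theta(\theta(x,y),z)-\theta(x,\theta(y,z))=\theta(\theta(y,x),z)-\theta(y,\theta(x,z)),
\]
which is exactly \eqref{eq:novikov-left-symmetric} for $\circ_\theta$. The identity $\theta(\theta(x,y),z)=\theta(\theta(x,z),y)$ is \eqref{eq:novikov-right-commutative}. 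For condition (3), the third and fourth identities of $\mathcal N_{\mathrm{NP}}$ are literally \eqref{eq:NP3} and \eqref{eq:NP4} after substituting $\circ_\theta$. Hence $(A,\cdot,\circ_\theta)$ is a Novikov--Poisson algebra.

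For the converse, start from a Novikov--Poisson algebra $(A,\cdot,\circ)$ and put $\theta(x,y)=x\circ y$. Reading the same four equivalences backwards shows that $\theta$ satisfies all defining identities, so $\theta\in\mathcal N_{\mathrm{NP}}(A,\cdot)$. By construction $\circ_\theta=\circ$, so the identity map $\mathrm{id}_A$ is a linear bijection preserving both products. It is therefore an isomorphism $(A,\cdot,\circ)\cong(A,\cdot,\circ_\theta)$ in the sense of the isomorphism definition above.

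There is no genuine obstacle. The only point requiring care is the sign rearrangement in the left-symmetric identity, checking that both sides carry consistent signs; this is settled by multiplying both sides by $-1$.
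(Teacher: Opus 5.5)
Your proposal is correct. The four defining identities of $\mathcal N_{\mathrm{NP}}(A,\cdot)$ are exactly \eqref{eq:novikov-left-symmetric}--\eqref{eq:NP4} for $\circ_\theta$, once the first is multiplied by $-1$. In the converse, $\circ_\theta=\circ$, so $\mathrm{id}_A$ is the required isomorphism. This is the routine term-by-term verification the paper has in mind when it calls the proof straightforward and omits it.
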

\begin{proof}
	The proof is straightforward and therefore omitted.
\end{proof}

\begin{rem}
\label{rem:first}
Let \((A,\cdot)\) be an \(n\)-dimensional commutative associative algebra with basis \(\{e_1,e_2,\dots,e_n\}\). For \(\theta\in \mathcal N_{\mathrm{NP}}(A,\cdot)\), define linear maps \(L_{e_i}:A\to A\) by \(L_{e_i}(x)=\theta(e_i,x)\), for \(i=1,2,\dots,n\). More generally, for \(x=\sum_{i=1}^n k_i e_i\), set
\(L_x=\sum_{i=1}^n k_iL_{e_i}\). It is straightforward to verify that \((L_{e_1},\dots,L_{e_n})\in\mathcal N'_{\mathrm{NP}}(A,\cdot)\), where \(\mathcal N'_{\mathrm{NP}}(A,\cdot)\) is defined as follows:
\[
\mathcal N'_{\mathrm{NP}}(A,\cdot)
:=
\left\{
(L_{e_1},\dots,L_{e_n})
\in \operatorname{End}(A)^n
\ \middle|\ 
\begin{aligned}
& L_{e_i}(L_{e_j}(e_k))-L_{L_{e_i}(e_j)}(e_k) \\
&\qquad =
  L_{e_j}(L_{e_i}(e_k))-L_{L_{e_j}(e_i)}(e_k), \\
& L_{L_{e_i}(e_j)}(e_k)
  =
  L_{L_{e_i}(e_k)}(e_j), \\
& L_{e_i\cdot e_j}(e_k)
  =
  e_i\cdot L_{e_j}(e_k), \\
& L_{e_i}(e_j)\cdot e_k
  -
  L_{e_j}(e_i)\cdot e_k \\
&\qquad =
  L_{e_i}(e_j\cdot e_k)
  -
  L_{e_j}(e_i\cdot e_k), \\
& \forall i,j,k\in\{1,\dots,n\}
\end{aligned}
\right\}.
\]
Conversely, let \((L_{e_1},\dots,L_{e_n})\in \mathcal N'_{\mathrm{NP}}(A,\cdot)\), and define the bilinear map
\(\theta:A\times A\to A\) by
\[
        \theta(x,y)=\sum_{i=1}^n k_iL_{e_i}(y),
        \qquad
        x=\sum_{i=1}^n k_i e_i.
\]
Then we can easily check that \(\theta\in\mathcal N_{\mathrm{NP}}(A,\cdot)\). Thus, after fixing the basis \(\{e_1,\dots,e_n\}\), the set \(\mathcal N_{\mathrm{NP}}(A,\cdot)\) is equivalent to \(\mathcal N'_{\mathrm{NP}}(A,\cdot)\).

Therefore, determining \((L_{e_1},L_{e_2},\dots,L_{e_n})\) amounts to solving the following system of polynomial equations:
\begin{equation}
\label{eq:NP-polynomial-system}
\begin{aligned}
& L_{e_i}(L_{e_j}(e_k))-L_{L_{e_i}(e_j)}(e_k)
 =
  L_{e_j}(L_{e_i}(e_k))-L_{L_{e_j}(e_i)}(e_k), \\
& L_{L_{e_i}(e_j)}(e_k)=L_{L_{e_i}(e_k)}(e_j), \\
& L_{e_i\cdot e_j}(e_k)=e_i\cdot L_{e_j}(e_k), \\
& L_{e_i}(e_j)\cdot e_k-L_{e_j}(e_i)\cdot e_k
 =
  L_{e_i}(e_j\cdot e_k)-L_{e_j}(e_i\cdot e_k),
\end{aligned}
\qquad \forall i,j,k\in\{1,\dots,n\}.
\end{equation}

Solving this system yields the collection of linear maps \((L_{e_1},L_{e_2},\dots,L_{e_n})\), and hence determines all Novikov--Poisson algebra structures associated with the associative algebra \((A,\cdot)\).
\end{rem}

\begin{prop}\label{prop:orbit-criterion}
Let \((A,\cdot)\) be a commutative associative algebra, and let \(\theta_1,\theta_2\in \mathcal N_{\mathrm{NP}}(A,\cdot)\). Then the Novikov--Poisson algebras \((A,\cdot,\circ_{\theta_1})\) and \((A,\cdot,\circ_{\theta_2})\) are isomorphic if and only if there exists \(\varphi\in\operatorname{Aut}(A,\cdot)\) such that
\[
        \varphi(\theta_1(x,y))
        =
        \theta_2(\varphi(x),\varphi(y)),
        \qquad \forall x,y\in A.
\]
\end{prop}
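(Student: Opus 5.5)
This follows directly from the definition of isomorphism of Novikov--Poisson algebras, so I will prove the two implications separately.

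For the ``if'' direction, suppose \(\varphi\in\operatorname{Aut}(A,\cdot)\) satisfies \(\varphi(\theta_1(x,y))=\theta_2(\varphi(x),\varphi(y))\) for all \(x,y\in A\). Then \(\varphi\) is a linear isomorphism \(A\to A\). It preserves the commutative associative product, because it is an automorphism of \((A,\cdot)\). It also satisfies
\[
\varphi(x\circ_{\theta_1}y)=\varphi(x)\circ_{\theta_2}\varphi(y),
\]
since \(x\circ_{\theta_i}y=\theta_i(x,y)\) by the proposition above. So \(\varphi\) is an isomorphism \((A,\cdot,\circ_{\theta_1})\to(A,\cdot,\circ_{\theta_2})\).

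For the ``only if'' direction, let \(\varphi:(A,\cdot,\circ_{\theta_1})\to(A,\cdot,\circ_{\theta_2})\) be an isomorphism of Novikov--Poisson algebras. By definition, \(\varphi\) is a bijective linear map with
\[
\varphi(x\cdot y)=\varphi(x)\cdot\varphi(y)\quad\text{and}\quad \varphi(x\circ_{\theta_1}y)=\varphi(x)\circ_{\theta_2}\varphi(y)\qquad\text{for all }x,y\in A.
\]
The first identity shows that \(\varphi\) is a bijective endomorphism of \((A,\cdot)\), so \(\varphi\in\operatorname{Aut}(A,\cdot)\). The second identity is exactly the required relation \(\varphi(\theta_1(x,y))=\theta_2(\varphi(x),\varphi(y))\).

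Both directions just unwind the definitions. The only point that needs care is that the underlying commutative associative algebra is the same \((A,\cdot)\) on both sides. This is what lets an isomorphism of the Novikov--Poisson structures be viewed as an element of \(\operatorname{Aut}(A,\cdot)\), instead of merely an isomorphism between two possibly different associative algebras.
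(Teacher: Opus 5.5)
Your proposal is correct: both directions follow directly from the definition of isomorphism once the same $(A,\cdot)$ appears on both sides. This is exactly the straightforward argument the paper has in mind; it omits the proof on those grounds.
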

\begin{proof}
	The proof is straightforward and therefore omitted.
\end{proof}

\begin{rem}
\label{rem:second}
Let \((A,\cdot)\) be an \(n\)-dimensional commutative associative algebra with a fixed basis \(\{e_1,e_2,\dots,e_n\}\), and let \(\theta_1,\theta_2\in\mathcal N_{\mathrm{NP}}(A,\cdot)\). For \(i=1,2,\dots,n\), define the linear operators \(L^1_{e_i},L^2_{e_i}:A\to A\) by
\[
        L^1_{e_i}(x)=\theta_1(e_i,x),
        \qquad
        L^2_{e_i}(x)=\theta_2(e_i,x).
\]
We denote by the same symbols their respective matrix representations with respect to the fixed basis. Then the Novikov--Poisson algebras \((A,\cdot,\circ_{\theta_1})\) and \((A,\cdot,\circ_{\theta_2})\) are isomorphic if and only if there exists an associative algebra automorphism \(\varphi\in\operatorname{Aut}(A,\cdot)\), represented by the invertible matrix
\[
\varphi=
\begin{pmatrix}
x_{11} & x_{12} & \cdots & x_{1n} \\
x_{21} & x_{22} & \cdots & x_{2n} \\
\vdots & \vdots & \ddots & \vdots \\
x_{n1} & x_{n2} & \cdots & x_{nn}
\end{pmatrix},
\]
such that, for each \(i=1,2,\dots,n\),
\begin{equation}
\label{eq:NP-isomorphism-matrix}
        \bigl(
        x_{1i}L^2_{e_1}
        +x_{2i}L^2_{e_2}
        +\cdots
        +x_{ni}L^2_{e_n}
        \bigr)\varphi
        =
        \varphi L^1_{e_i}.
\end{equation}
\end{rem}

\begin{thm}
Let \((A_1,\cdot_1)\) and \((A_2,\cdot_2)\) be isomorphic commutative associative algebras, and let \(\theta_1\in \mathcal N_{\mathrm{NP}}(A_1,\cdot_1)\). Then there exists \(\theta_2\in \mathcal N_{\mathrm{NP}}(A_2,\cdot_2)\) such that the Novikov--Poisson algebras \((A_1,\cdot_1,\circ_{\theta_1})\) and \((A_2,\cdot_2,\circ_{\theta_2})\) are isomorphic.
\end{thm}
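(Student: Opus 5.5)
The plan is to transport the structure along an isomorphism of the underlying commutative associative algebras. Fix an isomorphism $\psi:(A_1,\cdot_1)\to(A_2,\cdot_2)$, so that $\psi(x\cdot_1 y)=\psi(x)\cdot_2\psi(y)$ for all $x,y\in A_1$. Define the bilinear map $\theta_2:A_2\times A_2\to A_2$ by
\[
\theta_2(u,v):=\psi\bigl(\theta_1(\psi^{-1}(u),\psi^{-1}(v))\bigr),\qquad u,v\in A_2 .
\]
By construction, $\psi(\theta_1(x,y))=\theta_2(\psi(x),\psi(y))$ for all $x,y\in A_1$. Hence, as soon as $\theta_2\in\mathcal N_{\mathrm{NP}}(A_2,\cdot_2)$ is established, $\psi$ preserves both $\cdot$ and $\circ$, and is therefore an isomorphism $(A_1,\cdot_1,\circ_{\theta_1})\cong(A_2,\cdot_2,\circ_{\theta_2})$.

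The main step is to check that $\theta_2$ satisfies the four defining identities of $\mathcal N_{\mathrm{NP}}(A_2,\cdot_2)$. Since $\psi$ is bijective, write $u=\psi(x)$, $v=\psi(y)$, $w=\psi(z)$. Then every composite expression transports along $\psi$; for example,
\[
\theta_2(\theta_2(u,v),w)=\psi\bigl(\theta_1(\theta_1(x,y),z)\bigr),\qquad \theta_2(u\cdot_2 v,w)=\psi\bigl(\theta_1(x\cdot_1 y,z)\bigr),
\]
where the second equality uses $\psi^{-1}(u\cdot_2 v)=x\cdot_1 y$. Similarly,
\[
u\cdot_2\theta_2(v,w)=\psi\bigl(x\cdot_1\theta_1(y,z)\bigr)\quad\text{and}\quad \theta_2(u,v)\cdot_2 w=\psi\bigl(\theta_1(x,y)\cdot_1 z\bigr).
\]
Each of the four identities for $\theta_2$ at $(u,v,w)$ is therefore the image under the linear map $\psi$ of the corresponding identity for $\theta_1$ at $(x,y,z)$. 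Those identities hold because $\theta_1\in\mathcal N_{\mathrm{NP}}(A_1,\cdot_1)$.

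I expect no serious obstacle here; the only care needed is bookkeeping in the fourth identity, where $\cdot$ and $\theta$ are nested in both orders. In matrix form, in the spirit of Remark~\ref{rem:second}, the construction amounts to taking the operators $L^2_{\psi(e_i)}=\psi L^1_{e_i}\psi^{-1}$. The practical consequence is that, when classifying, one may fix a single representative of each isomorphism class of $3$-dimensional commutative associative algebras. Combined with Proposition~\ref{prop:orbit-criterion}, this shows that it suffices to compute $\mathcal N_{\mathrm{NP}}(A,\cdot)$ modulo the action of $\operatorname{Aut}(A,\cdot)$ for each such representative.
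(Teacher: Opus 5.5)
Your proposal is correct and follows essentially the same route as the paper. You transport $\theta_1$ along an isomorphism $\psi$ via $\theta_2(u,v)=\psi(\theta_1(\psi^{-1}(u),\psi^{-1}(v)))$ and note that each defining identity for $\theta_2$ is the $\psi$-image of the corresponding identity for $\theta_1$; you just spell out the verification (and the matrix form $L^2_{\psi(e_i)}=\psi L^1_{e_i}\psi^{-1}$) that the paper summarizes in one line.
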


\begin{proof}
Let \(f:A_1\to A_2\) be an isomorphism of commutative associative algebras.
Define
\[
        \theta_2(x,y)
        =
        f\bigl(\theta_1(f^{-1}(x),f^{-1}(y))\bigr),
        \qquad x,y\in A_2.
\]
Then
\[
        \theta_2(f(a),f(b))=f(\theta_1(a,b)),
        \qquad a,b\in A_1.
\]
Thus the product \(\circ_{\theta_2}\) is the transport of \(\circ_{\theta_1}\) by \(f\). Since \(f\) also preserves the commutative associative products, the defining identities of Novikov--Poisson algebras are
preserved under \(f\). Hence \(\theta_2\in \mathcal N_{\mathrm{NP}}(A_2,\cdot_2)\), and \(f\) is an
isomorphism from \((A_1,\cdot_1,\circ_{\theta_1})\) to \((A_2,\cdot_2,\circ_{\theta_2})\).
\end{proof}

Solving the polynomial system~\eqref{eq:NP-polynomial-system} to determine all Novikov--Poisson algebra structures on a given commutative associative algebra yields structures depending on multiple parameters. Using the techniques developed in \cite{Zhang-Z}, we simplify the classification by minimizing the parameter set up to isomorphism through \textbf{isomorphism reduction}. Subsequently, we determine whether the structures are pairwise isomorphic, which we refer to as \textbf{isomorphism testing}. Details of isomorphism reduction and isomorphism testing are given in \cite{Zhang-Z}.

Below we present the classification of 3-dimensional commutative associative algebras.

\begin{thm}\cite{Rakhimov}\label{thm:class}
Let $(A, \cdot)$ be a $3$-dimensional commutative associative algebra with nonzero multiplication. 
Then $(A, \cdot)$ is isomorphic to one of the following algebras, where $\{e_1, e_2, e_3\}$ is a basis of $A$ and the omitted products are zero:
\[
\renewcommand{\arraystretch}{1.2}
\begin{array}{rl}
    \mathrm{A}_{1,1} : & e_1 \cdot e_3 = e_2. \\
    \mathrm{A}_{1,2} : & e_1 \cdot e_1 = e_2, \quad  e_1 \cdot e_2 = e_3. \\
    \mathrm{A}_{1,3} : & e_1 \cdot e_3 = e_1, \quad  e_2 \cdot e_3 = e_2, \quad  e_3 \cdot e_3 = e_3. \\
    \mathrm{A}_{1,4} : & e_1 \cdot e_3 = e_2, \quad  e_2 \cdot e_3 = e_2, \quad  e_3 \cdot e_3 = e_3. \\
    \mathrm{A}_{1,5} : & e_1 \cdot e_1 = e_2, \quad  e_1 \cdot e_3 = e_1, \quad  e_2 \cdot e_3 = e_2, \quad  e_3 \cdot e_3 = e_3. \\
    \mathrm{A}_{1,6} : & e_1 \cdot e_1 = e_1, \quad  e_2 \cdot e_2 = e_2, \quad  e_3 \cdot e_3 = e_3. \\
    \mathrm{A}_{1,7} : & e_1 \cdot e_1 = e_1, \quad  e_2 \cdot e_2 = e_2, \quad  e_1 \cdot e_3 = e_3. \\
    \mathrm{A}_{1,8} : & e_1 \cdot e_1 = e_1, \quad  e_2 \cdot e_2 = e_2. \\
    \mathrm{A}_{1,9} : & e_1 \cdot e_1 = e_1, \quad  e_2 \cdot e_2= e_3. \\
    \mathrm{A}_{1,10} : & e_1 \cdot e_1 = e_1. \\
    \mathrm{A}_{1,11} : & e_1 \cdot e_1 = e_2.
\end{array}
\]
\end{thm}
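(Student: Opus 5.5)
The plan is to classify $3$-dimensional complex commutative associative algebras by splitting on the nilradical $N$ (the Jacobson radical) and the semisimple quotient $A/N\cong\mathbb C^r$. By Wedderburn--Malcev, $A=S\oplus N$ as vector spaces, with $S\cong\mathbb C^r$ a subalgebra. When $r\geq 1$, $S$ contains orthogonal idempotents $\varepsilon_1,\dots,\varepsilon_r$, and these give a Peirce decomposition
\[
A=\bigoplus_{i}\varepsilon_iA\oplus A_0,\qquad A_0=\{a: \varepsilon_i a=0\ \forall i\}.
\]
Each $\varepsilon_iA$ is a local unital algebra with identity $\varepsilon_i$, and $A_0$ is a nilpotent ideal. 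Every piece has dimension at most $3$, so the classification reduces to listing local unital algebras and nilpotent commutative associative algebras of dimension at most $3$, and then distributing the total dimension $3$ among the pieces. Commutativity ensures there are no cross terms between the pieces.

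We first handle the pieces.
\begin{enumerate}
\item Nilpotent algebras (the case $r=0$, or the piece $A_0$). In dimension $1$ and $2$ with nonzero product, the only possibility is $e_1^2=e_2$. In dimension $3$ we split on $\dim N^2$.
 \begin{enumerate}
 \item If $N^3\neq 0$, then $N$ is generated by one element $e_1$, which gives $\mathrm A_{1,2}$.
 \item If $N^3=0$ and $\dim N^2=1$, the multiplication is a symmetric bilinear form $N/N^2\times N/N^2\to N^2\cong\mathbb C$. Up to congruence it has rank $1$ or $2$. Rank $1$ gives $\mathrm A_{1,11}$. Rank $2$ is equivalent to the hyperbolic form $e_1e_3=e_2$, which is $\mathrm A_{1,1}$.
 \item If $\dim N^2=2$, then $N$ would be generated by one element with $N^3\neq0$, so this case is already covered.
 \end{enumerate}
\item Local unital algebras. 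These are $\mathbb C$, $\mathbb C[t]/(t^2)$, and in dimension $3$ either $\mathbb C[t]/(t^3)$ or $\mathbb C[s,t]/(s,t)^2$. For the latter two we apply the same rank/nilpotency analysis to the maximal ideal.
\end{enumerate}

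Next we enumerate the partitions of $3$ into pieces $(\varepsilon_1A,\dots,A_0)$ and match each to the list.
\begin{enumerate}
\item Pieces $\mathbb C^3$ give $\mathrm A_{1,6}$.
\item $\mathbb C^2\oplus(\text{zero }1\text{-dim})$ gives $\mathrm A_{1,8}$.
\item $\mathbb C\oplus\mathbb C[t]/(t^2)$ gives $\mathrm A_{1,7}$.
\item $\mathbb C\oplus(\text{2-dim nilpotent})$ gives $\mathrm A_{1,9}$ when the nilpotent part has nonzero product, and $\mathrm A_{1,10}$ when it is zero.
\item $\mathbb C[t]/(t^2)\oplus$ a zero $1$-dimensional part is a case we must check separately. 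Here $e_3$ is the unit of the local part, and we need to see how it lines up with the listed algebras, possibly $\mathrm A_{1,4}$ after relabeling.
\item Unital $3$-dimensional local algebras give $\mathrm A_{1,3}$, $\mathrm A_{1,4}$ and $\mathrm A_{1,5}$, with $e_3$ as the unit. Looking at the relations of $\mathrm A_{1,3}$, however, $e_3$ acts as an identity only on $\mathrm{span}(e_1,e_2)$ while $e_1e_1=e_1e_2=e_2e_2=0$. So $\mathrm A_{1,3}\cong\mathbb C[s,t]/(s,t)^2$. Similarly, $\mathrm A_{1,5}\cong\mathbb C[t]/(t^3)$.
\item In $\mathrm A_{1,4}$, $e_3$ is an idempotent with $e_3e_1=e_2$ and $e_3e_2=e_2$. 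Set $e_1'=e_1-e_2$. Then $e_3e_1'=0$, and $e_1'^2=e_1^2-2e_1e_2+e_2^2=0$. So $\mathrm A_{1,4}\cong \mathbb C[t]/(t^2)\oplus\mathbb C_0$, with $\mathbb C_0$ the zero algebra. This settles the case left open above.
\end{enumerate}

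The main obstacle is to make sure the enumeration is exhaustive and that the eleven algebras are pairwise non-isomorphic. For non-isomorphism we use the invariants $r=\dim A/N$, $\dim N$, $\dim N^2$, $\dim N^3$, $\dim\operatorname{Ann}(A)$, and whether $A$ is unital. These separate the list; for example, $\mathrm A_{1,1}$ and $\mathrm A_{1,11}$ are distinguished by $\dim\operatorname{Ann}$, which is $1$ versus $2$. Since the statement is quoted from \cite{Rakhimov}, one could instead simply cite it. The check above just confirms that the list matches the structural decomposition.
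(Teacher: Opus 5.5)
Your argument is correct, and it differs from the paper, which gives no proof and simply imports the list from \cite{Rakhimov}.

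Your route is structural. You lift the primitive idempotents of $A/N\cong\mathbb C^r$ and use the commutative Peirce decomposition $A=\bigoplus_i\varepsilon_iA\oplus A_0$, which has no cross terms. This reduces everything to local unital algebras and nilpotent algebras of dimension at most $3$, and it yields exactly eleven algebras:
\begin{itemize}
\item $r=0$: $\mathrm A_{1,1},\mathrm A_{1,2},\mathrm A_{1,11}$;
\item $r=1$: $\mathrm A_{1,3},\mathrm A_{1,4},\mathrm A_{1,5},\mathrm A_{1,9},\mathrm A_{1,10}$;
\item $r=2$: $\mathrm A_{1,7},\mathrm A_{1,8}$;
\item $r=3$: $\mathrm A_{1,6}$.
\end{itemize}

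What your approach buys over the bare citation is a self-contained explanation of completeness, together with an explicit non-isomorphism check. The only delicate pairs are these two, and your invariants separate both:
\begin{itemize}
\item $\mathrm A_{1,4}$ versus $\mathrm A_{1,10}$: both have $r=1$, $N^2=0$ and no unit, and they are separated by $\dim\operatorname{Ann}(A)=1$ versus $2$.
\item $\mathrm A_{1,7}$ versus $\mathrm A_{1,8}$: these are separated by unitality.
\end{itemize}

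A few statements should be cleaned up.
\begin{itemize}
\item In the enumeration you first say that unital local $3$-dimensional algebras give $\mathrm A_{1,3},\mathrm A_{1,4},\mathrm A_{1,5}$. But $\mathrm A_{1,4}$ is not unital; your own substitution $e_1'=e_1-e_2$ shows $\mathrm A_{1,4}\cong\mathbb C[t]/(t^2)\oplus\mathbb C_0$. That item should list only $\mathrm A_{1,3}$ and $\mathrm A_{1,5}$.
\item In $\mathrm A_{1,3}$ the element $e_3$ is the identity of the whole algebra, not only on $\operatorname{span}(e_1,e_2)$.
\item There is no $1$-dimensional nilpotent algebra with nonzero product, so ``dimension $1$ and $2$'' should read ``dimension $2$''.
\item You do not need the full Wedderburn--Malcev theorem. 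Lifting a complete set of orthogonal primitive idempotents of $A/N$ modulo the nilpotent ideal $N$ suffices, and this also gives $A_0\subseteq N$.
\end{itemize}
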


\begin{thm}\label{thm:classification-np3}
		
		Let \((A, \cdot, \circ)\) be a nonzero complex 3-dimensional Novikov-Poisson algebra, then \((A, \cdot, \circ)\) is isomorphic to one of the following algebras:

\footnotesize

\begin{enumerate}

	\item \textbf{NP01$_{\alpha,\beta}$:}
	$
	\begin{cases}
		e_1 \cdot e_3 = e_2 \\
		e_1 \circ e_1 = e_1, \quad e_1 \circ e_2 = -e_2, \quad e_1 \circ e_3 = e_1, \quad e_2 \circ e_1 = e_2, \quad e_2 \circ e_3 = e_2 \\
		e_3 \circ e_1 = \alpha e_2+e_3, \quad e_3 \circ e_2 = -e_2, \quad e_3 \circ e_3 = \beta e_2+e_3
	\end{cases}
	$

	\item \textbf{NP02$_{\alpha}$:}
	$
	\begin{cases}
		e_1 \cdot e_3 = e_2 \\
		e_1 \circ e_1 = e_1, \quad e_1 \circ e_2 = -e_2, \quad e_2 \circ e_1 = e_2, \quad e_3 \circ e_1 = \alpha e_2+e_3, \\
		e_3 \circ e_3 = e_2
	\end{cases}
	$

	\item \textbf{NP03$_{\alpha}$:}
	$
	\begin{cases}
		e_1 \cdot e_3 = e_2 \\
		e_1 \circ e_1 = e_1, \quad e_1 \circ e_2 = -e_2, \quad e_2 \circ e_1 = e_2, \quad e_3 \circ e_1 = \alpha e_2+e_3
	\end{cases}
	$

	\item \textbf{NP04$_{\alpha,\beta,\gamma}$:}
	$
	\begin{cases}
		e_1 \cdot e_3 = e_2 \\
		e_1 \circ e_1 = e_2, \quad e_1 \circ e_3 = \alpha e_2, \quad e_3 \circ e_1 = \beta e_2, \quad e_3 \circ e_3 = \gamma e_2
	\end{cases}
	$

	\item \textbf{NP05$_{\alpha,\beta}$:}
	$
	\begin{cases}
		e_1 \cdot e_3 = e_2 \\
		e_1 \circ e_3 = \alpha e_2, \quad e_3 \circ e_1 = \beta e_2
	\end{cases}
	$

	\item \textbf{NP06$_{\alpha,\beta}^{\alpha\neq 2}$:}
	$
	\begin{cases}
		e_1 \cdot e_1 = e_2, \quad e_1 \cdot e_2 = e_3 \\
		e_1 \circ e_1 = \alpha e_1, \quad e_1 \circ e_2 = e_2+\beta e_3, \quad e_1 \circ e_3 = (2-\alpha) e_3 \\
		e_2 \circ e_1 = \alpha e_2, \quad e_2 \circ e_2 = e_3, \quad e_3 \circ e_1 = \alpha e_3
	\end{cases}
	$

	\item \textbf{NP07$_{\alpha,\beta}$:}
	$
	\begin{cases}
		e_1 \cdot e_1 = e_2, \quad e_1 \cdot e_2 = e_3 \\
		e_1 \circ e_1 = 2e_1+\alpha e_3, \quad e_1 \circ e_2 = e_2+\beta e_3, \quad e_2 \circ e_1 = 2e_2 \\
		e_2 \circ e_2 = e_3, \quad e_3 \circ e_1 = 2e_3
	\end{cases}
	$

	\item \textbf{NP08$_{\alpha}$:}
	$
	\begin{cases}
		e_1 \cdot e_1 = e_2, \quad e_1 \cdot e_2 = e_3 \\
		e_1 \circ e_1 = e_1+\alpha e_2, \quad e_1 \circ e_3 = -e_3, \quad e_2 \circ e_1 = e_2+\alpha e_3, \quad e_3 \circ e_1 = e_3
	\end{cases}
	$

	\item \textbf{NP09$_{\alpha,\beta}$:}
	$
	\begin{cases}
		e_1 \cdot e_1 = e_2, \quad e_1 \cdot e_2 = e_3 \\
		e_1 \circ e_1 = \alpha e_2, \quad e_1 \circ e_2 = \beta e_3, \quad e_2 \circ e_1 = \alpha e_3
	\end{cases}
	$

	\item \textbf{NP10$_{\alpha}$:}
	$
	\begin{cases}
		e_1 \cdot e_1 = e_2, \quad e_1 \cdot e_2 = e_3 \\
		e_1 \circ e_1 = \alpha e_2+e_3, \quad e_1 \circ e_2 = \alpha e_3, \quad e_2 \circ e_1 = \alpha e_3
	\end{cases}
	$

	\item \textbf{NP11$_{\alpha,\beta,\gamma}$:}
	$
	\begin{cases}
		e_1 \cdot e_3 = e_1, \quad e_2 \cdot e_3 = e_2, \quad e_3 \cdot e_3 = e_3 \\
		e_1 \circ e_3 = \alpha e_1, \quad e_2 \circ e_3 = \alpha e_2, \quad e_3 \circ e_1 = e_2, \quad e_3 \circ e_2 = \beta e_1+\gamma e_2 \\
		e_3 \circ e_3 = e_1+\alpha e_3
	\end{cases}
	$

	\item \textbf{NP12$_{\alpha,\beta,\gamma}$:}
	$
	\begin{cases}
		e_1 \cdot e_3 = e_1, \quad e_2 \cdot e_3 = e_2, \quad e_3 \cdot e_3 = e_3 \\
		e_1 \circ e_3 = \alpha e_1, \quad e_2 \circ e_3 = \alpha e_2, \quad e_3 \circ e_1 = \beta e_1, \quad e_3 \circ e_2 = e_1+\gamma e_2 \\
		e_3 \circ e_3 = e_1+\alpha e_3
	\end{cases}
	$

	\item \textbf{NP13$_{\alpha,\beta}$:}
	$
	\begin{cases}
		e_1 \cdot e_3 = e_1, \quad e_2 \cdot e_3 = e_2, \quad e_3 \cdot e_3 = e_3 \\
		e_1 \circ e_3 = \alpha e_1, \quad e_2 \circ e_3 = \alpha e_2, \quad e_3 \circ e_1 = \beta e_1, \quad e_3 \circ e_2 = \beta e_2 \\
		e_3 \circ e_3 = e_1+\alpha e_3
	\end{cases}
	$

	\item \textbf{NP14$_{\alpha,\beta, \gamma}$:}
	$
	\begin{cases}
		e_1 \cdot e_3 = e_1, \quad e_2 \cdot e_3 = e_2, \quad e_3 \cdot e_3 = e_3 \\
		e_1 \circ e_3 = \alpha e_1, \quad e_2 \circ e_3 = \alpha e_2, \quad e_3 \circ e_1 = \beta e_2 \\
		e_3 \circ e_2 = e_1+\gamma e_2, \quad e_3 \circ e_3 = \alpha e_3
	\end{cases}
	$

	\item \textbf{NP15$_{\alpha,\beta}$:}
	$
	\begin{cases}
		e_1 \cdot e_3 = e_1, \quad e_2 \cdot e_3 = e_2, \quad e_3 \cdot e_3 = e_3 \\
		e_1 \circ e_3 = \alpha e_1, \quad e_2 \circ e_3 = \alpha e_2, \quad e_3 \circ e_1 = \beta e_1 \\
		e_3 \circ e_2 = \beta e_2, \quad e_3 \circ e_3 = \alpha e_3
	\end{cases}
	$

	\item \textbf{NP16$_{\alpha,\beta}$:}
	$
	\begin{cases}
		e_1 \cdot e_3 = e_2, \quad e_2 \cdot e_3 = e_2, \quad e_3 \cdot e_3 = e_3 \\
		e_1 \circ e_1 = -e_1+e_2, \quad e_1 \circ e_3 = \alpha e_2, \quad e_2 \circ e_3 = \alpha e_2, \quad e_3 \circ e_1 = \beta e_2 \\
		e_3 \circ e_2 = \beta e_2, \quad e_3 \circ e_3 = e_2+\alpha e_3
	\end{cases}
	$

	\item \textbf{NP17$_{\alpha,\beta}$:}
	$
	\begin{cases}
		e_1 \cdot e_3 = e_2, \quad e_2 \cdot e_3 = e_2, \quad e_3 \cdot e_3 = e_3 \\
		e_1 \circ e_3 = \alpha e_2, \quad e_2 \circ e_3 = \alpha e_2, \quad e_3 \circ e_1 = \beta e_2, \quad e_3 \circ e_2 = \beta e_2 \\
		e_3 \circ e_3 = e_2+\alpha e_3
	\end{cases}
	$

	\item \textbf{NP18$_{\alpha,\beta}$:}
	$
	\begin{cases}
		e_1 \cdot e_3 = e_2, \quad e_2 \cdot e_3 = e_2, \quad e_3 \cdot e_3 = e_3 \\
		e_1 \circ e_1 = -e_1+e_2, \quad e_1 \circ e_3 = \alpha e_2, \quad e_2 \circ e_3 = \alpha e_2 \\
		e_3 \circ e_1 = \beta e_2, \quad e_3 \circ e_2 = \beta e_2, \quad e_3 \circ e_3 = \alpha e_3
	\end{cases}
	$

	\item \textbf{NP19$_{\alpha,\beta}$:}
	$
	\begin{cases}
		e_1 \cdot e_3 = e_2, \quad e_2 \cdot e_3 = e_2, \quad e_3 \cdot e_3 = e_3 \\
		e_1 \circ e_3 = \alpha e_2, \quad e_2 \circ e_3 = \alpha e_2, \quad e_3 \circ e_1 = \beta e_2 \\
		e_3 \circ e_2 = \beta e_2, \quad e_3 \circ e_3 = \alpha e_3
	\end{cases}
	$

	\item \textbf{NP20$_{\alpha,\beta,\gamma}$:}
	$
	\begin{cases}
		e_1 \cdot e_1 = e_2, \quad e_1 \cdot e_3 = e_1, \quad e_2 \cdot e_3 = e_2, \quad e_3 \cdot e_3 = e_3 \\
		e_1 \circ e_1 = (\frac{\alpha+\beta}{2}) e_2, \quad e_1 \circ e_3 = \beta e_1+e_2, \quad e_2 \circ e_3 = \beta e_2 \\
		e_3 \circ e_1 = (\frac{\alpha+\beta}{2}) e_1+\gamma e_2, \quad e_3 \circ e_2 = \alpha e_2, \quad e_3 \circ e_3 = e_1+\beta e_3
	\end{cases}
	$

	\item \textbf{NP21$_{\alpha,\beta}$:}
	$
	\begin{cases}
		e_1 \cdot e_1 = e_2, \quad e_1 \cdot e_3 = e_1, \quad e_2 \cdot e_3 = e_2, \quad e_3 \cdot e_3 = e_3 \\
		e_1 \circ e_1 = (\frac{\alpha+\beta}{2}) e_2, \quad e_1 \circ e_3 = \beta e_1, \quad e_2 \circ e_3 = \beta e_2 \\
		e_3 \circ e_1 = (\frac{\alpha+\beta}{2}) e_1, \quad e_3 \circ e_2 = \alpha e_2, \quad e_3 \circ e_3 = e_2+\beta e_3
	\end{cases}
	$

	\item \textbf{NP22$_{\alpha,\beta}^{\beta\neq 0}$:}
	$
	\begin{cases}
		e_1 \cdot e_1 = e_2, \quad e_1 \cdot e_3 = e_1, \quad e_2 \cdot e_3 = e_2, \quad e_3 \cdot e_3 = e_3 \\
		e_1 \circ e_1 = \alpha e_2, \quad e_1 \circ e_3 = \alpha e_1, \quad e_2 \circ e_3 = \alpha e_2, \quad e_3 \circ e_1 = \alpha e_1+\beta e_2 \\
		e_3 \circ e_2 = \alpha e_2, \quad e_3 \circ e_3 = e_2+\alpha e_3
	\end{cases}
	$

	\item \textbf{NP23$_{\alpha,\beta}$:}
	$
	\begin{cases}
		e_1 \cdot e_1 = e_2, \quad e_1 \cdot e_3 = e_1, \quad e_2 \cdot e_3 = e_2, \quad e_3 \cdot e_3 = e_3 \\
		e_1 \circ e_1 = (\frac{\alpha+\beta}{2}) e_2, \quad e_1 \circ e_3 = \alpha e_1, \quad e_2 \circ e_3 = \alpha e_2 \\
		e_3 \circ e_1 = (\frac{\alpha+\beta}{2}) e_1, \quad e_3 \circ e_2 = \beta e_2, \quad e_3 \circ e_3 = \alpha e_3
	\end{cases}
	$

	\item \textbf{NP24$_{\alpha}$:}
	$
	\begin{cases}
		e_1 \cdot e_1 = e_2, \quad e_1 \cdot e_3 = e_1, \quad e_2 \cdot e_3 = e_2, \quad e_3 \cdot e_3 = e_3 \\
		e_1 \circ e_1 = \alpha e_2, \quad e_1 \circ e_3 = \alpha e_1, \quad e_2 \circ e_3 = \alpha e_2, \quad e_3 \circ e_1 = \alpha e_1+e_2 \\
		e_3 \circ e_2 = \alpha e_2, \quad e_3 \circ e_3 = \alpha e_3
	\end{cases}
	$

\item \textbf{NP25$_{\alpha,\beta,\gamma}$:}
	$
	\begin{cases}
		e_1 \cdot e_1 = e_1, \quad e_2 \cdot e_2 = e_2, \quad e_3 \cdot e_3 = e_3 \\
		e_1 \circ e_1 = \alpha e_1, \quad e_2 \circ e_2 = \beta e_2, \quad e_3 \circ e_3 = \gamma e_3
	\end{cases}
	$

	\item \textbf{NP26$_{\alpha,\beta,\gamma}$:}
	$
	\begin{cases}
		e_1 \cdot e_1 = e_1, \quad e_2 \cdot e_2 = e_2, \quad e_1 \cdot e_3 = e_3 \\
		e_1 \circ e_1 = \alpha e_1 + e_3, \quad e_1 \circ e_3 = \beta e_3, \quad e_2 \circ e_2 = \gamma e_2, \quad e_3 \circ e_1 = \alpha e_3
	\end{cases}
	$

	\item \textbf{NP27$_{\alpha,\beta,\gamma}$:}
	$
	\begin{cases}
		e_1 \cdot e_1 = e_1, \quad e_2 \cdot e_2 = e_2, \quad e_1 \cdot e_3 = e_3 \\
		e_1 \circ e_1 = \alpha e_1, \quad e_1 \circ e_3 = \beta e_3, \quad e_2 \circ e_2 = \gamma e_2, \quad e_3 \circ e_1 = \alpha e_3
	\end{cases}
	$

	\item \textbf{NP28$_{\alpha,\beta}$:}
	$
	\begin{cases}
		e_1 \cdot e_1 = e_1, \quad e_2 \cdot e_2 = e_2 \\
		e_1 \circ e_1 = \alpha e_1, \quad e_2 \circ e_2 = \beta e_2, \quad e_3 \circ e_3 = e_3
	\end{cases}
	$

	\item \textbf{NP29$_{\alpha,\beta}$:}
	$
	\begin{cases}
		e_1 \cdot e_1 = e_1, \quad e_2 \cdot e_2 = e_2 \\
		e_1 \circ e_1 = \alpha e_1, \quad e_2 \circ e_2 = \beta e_2
	\end{cases}
	$

	\item \textbf{NP30$_{\alpha,\beta}$:}
	$
	\begin{cases}
		e_1 \cdot e_1 = e_1, \quad e_2 \cdot e_2 = e_3 \\
		e_1 \circ e_1 = \alpha e_1, \quad e_2 \circ e_2 = \beta e_2, \quad e_2 \circ e_3 = e_3, \quad e_3 \circ e_2 = \beta e_3
	\end{cases}
	$

	\item \textbf{NP31$_{\alpha,\beta}$:}
	$
	\begin{cases}
		e_1 \cdot e_1 = e_1, \quad e_2 \cdot e_2 = e_3 \\
		e_1 \circ e_1 = \alpha e_1, \quad e_2 \circ e_2 = e_2 + \beta e_3, \quad e_3 \circ e_2 = e_3
	\end{cases}
	$

	\item \textbf{NP32$_{\alpha,\beta}$:}
	$
	\begin{cases}
		e_1 \cdot e_1 = e_1, \quad e_2 \cdot e_2 = e_3 \\
		e_1 \circ e_1 = \alpha e_1, \quad e_2 \circ e_2 = \beta e_3
	\end{cases}
	$

	\item \textbf{NP33$_{\alpha,\beta}$:}
	$
	\begin{cases}
		e_1 \cdot e_1 = e_1 \\
		e_1 \circ e_1 = \alpha e_1, \quad e_2 \circ e_2 = \beta e_2, \quad e_2 \circ e_3 = e_3, \quad e_3 \circ e_2 = \beta e_3
	\end{cases}
	$

\item \textbf{NP34$_{\alpha}$:}
$
\begin{cases}
e_1 \cdot e_1 = e_1 \\
e_1 \circ e_1 = \alpha e_1, \quad e_2 \circ e_2 = e_2+e_3, \quad e_3 \circ e_2 = e_3
\end{cases}
$

\item \textbf{NP35$_{\alpha}$:}
$
\begin{cases}
e_1 \cdot e_1 = e_1 \\
e_1 \circ e_1 = \alpha e_1, \quad e_2 \circ e_2 = e_2, \quad e_3 \circ e_2 = e_3
\end{cases}
$

\item \textbf{NP36$_{\alpha}$:}
$
\begin{cases}
e_1 \cdot e_1 = e_1 \\
e_1 \circ e_1 = \alpha e_1, \quad e_2 \circ e_2 = e_3
\end{cases}
$

\item \textbf{NP37$_{\alpha}$:}
$
\begin{cases}
e_1 \cdot e_1 = e_1 \\
e_1 \circ e_1 = \alpha e_1
\end{cases}
$

\item \textbf{NP38$_{\alpha}$:}
$
\begin{cases}
e_1 \cdot e_1 = e_1 \\
e_1 \circ e_1 = \alpha e_1, \quad e_2 \circ e_2 = e_2, \quad e_3 \circ e_3 = e_3
\end{cases}
$

\item \textbf{NP39$_{\alpha}$:}
$
\begin{cases}
e_1 \cdot e_1 = e_1 \\
e_1 \circ e_1 = \alpha e_1, \quad e_2 \circ e_2 = e_2
\end{cases}
$

	\item \textbf{NP40$_{\alpha,\beta}^{\alpha \neq \beta}$:}
	$
	\begin{cases}
		e_1 \cdot e_1 = e_2 \\
		e_1 \circ e_1 = e_1, \quad e_1 \circ e_2 = \alpha e_2, \quad e_1 \circ e_3 = \beta e_3, \quad e_2 \circ e_1 = e_2, \quad e_3 \circ e_1 = e_3
	\end{cases}
	$

	\item \textbf{NP41$_{\alpha}$:}
	$
	\begin{cases}
		e_1 \cdot e_1 = e_2 \\
		e_1 \circ e_1 = e_1, \quad e_1 \circ e_2 = \alpha e_2, \quad e_1 \circ e_3 = e_2 + \alpha e_3, \quad e_2 \circ e_1 = e_2, \quad e_3 \circ e_1 = e_3
	\end{cases}
	$

	\item \textbf{NP42$_{\alpha}$:}
	$
	\begin{cases}
		e_1 \cdot e_1 = e_2 \\
		e_1 \circ e_1 = e_1, \quad e_1 \circ e_2 = \alpha e_2, \quad e_1 \circ e_3 = \alpha e_3, \quad e_2 \circ e_1 = e_2, \quad e_3 \circ e_1 = e_3
	\end{cases}
	$

	\item \textbf{NP43$_{\alpha,\beta}^{\alpha \neq 0}$:}
	$
	\begin{cases}
		e_1 \cdot e_1 = e_2 \\
		e_1 \circ e_1 = e_1 + \alpha e_2, \quad e_1 \circ e_3 = \beta e_3, \quad e_2 \circ e_1 = e_2, \quad e_3 \circ e_1 = e_3
	\end{cases}
	$

	\item \textbf{NP44$_{\alpha,\beta}^{\beta \neq 0}$:}
	$
	\begin{cases}
		e_1 \cdot e_1 = e_2 \\
		e_1 \circ e_1 = e_1, \quad e_1 \circ e_2 = \alpha e_2 + e_3, \quad e_1 \circ e_3 = \beta e_2, \quad e_2 \circ e_1 = e_2, \quad e_3 \circ e_1 = e_3
	\end{cases}
	$

	\item \textbf{NP45$_{\alpha,\beta}$:}
	$
	\begin{cases}
		e_1 \cdot e_1 = e_2 \\
		e_1 \circ e_1 = e_1 + \alpha e_2, \quad e_1 \circ e_2 = \beta e_2 + e_3, \quad e_2 \circ e_1 = e_2, \quad e_3 \circ e_1 = e_3
	\end{cases}
	$

	\item \textbf{NP46$_{\alpha}^{\alpha \neq 0}$:}
	$
	\begin{cases}
		e_1 \cdot e_1 = e_2 \\
		e_1 \circ e_1 = e_1 + e_3, \quad e_1 \circ e_2 = \alpha e_2, \quad e_2 \circ e_1 = e_2, \quad e_3 \circ e_1 = e_3
	\end{cases}
	$

	\item \textbf{NP47$_{\alpha}$:}
	$
	\begin{cases}
		e_1 \cdot e_1 = e_2 \\
		e_1 \circ e_1 = e_1 + e_3, \quad e_1 \circ e_3 = \alpha e_2, \quad e_2 \circ e_1 = e_2, \quad e_3 \circ e_1 = e_3
	\end{cases}
	$

	\item \textbf{NP48$_{\alpha}$:}
	$
	\begin{cases}
		e_1 \cdot e_1 = e_2 \\
		e_1 \circ e_1 = \alpha e_2, \quad e_1 \circ e_2 = e_3, \quad e_1 \circ e_3 = e_3
	\end{cases}
	$

	\item \textbf{NP49$_{\alpha}$:}
	$
	\begin{cases}
		e_1 \cdot e_1 = e_2 \\
		e_1 \circ e_2 = e_2, \quad e_1 \circ e_3 = \alpha e_3
	\end{cases}
	$

	\item \textbf{NP50:}
	$
	\begin{cases}
		e_1 \cdot e_1 = e_2 \\
		e_1 \circ e_2 = e_2, \quad e_1 \circ e_3 = e_2 + e_3
	\end{cases}
	$

	\item \textbf{NP51$_{\alpha}$:}
	$
	\begin{cases}
		e_1 \cdot e_1 = e_2 \\
		e_1 \circ e_1 = \alpha e_2, \quad e_1 \circ e_3 = e_3
	\end{cases}
	$

	\item \textbf{NP52$_{\alpha}$:}
	$
	\begin{cases}
		e_1 \cdot e_1 = e_2 \\
		e_1 \circ e_2 = \alpha e_2 + e_3, \quad e_1 \circ e_3 = e_2
	\end{cases}
	$

	\item \textbf{NP53:}
	$
	\begin{cases}
		e_1 \cdot e_1 = e_2 \\
		e_1 \circ e_1 = e_3, \quad e_1 \circ e_3 = e_2
	\end{cases}
	$

	\item \textbf{NP54:}
	$
	\begin{cases}
		e_1 \cdot e_1 = e_2 \\
		e_1 \circ e_3 = e_2
	\end{cases}
	$

	\item \textbf{NP55$_{\alpha}$:}
	$
	\begin{cases}
		e_1 \cdot e_1 = e_2 \\
		e_1 \circ e_1 = \alpha e_2, \quad e_1 \circ e_2 = e_3
	\end{cases}
	$

	\item \textbf{NP56:}
	$
	\begin{cases}
		e_1 \cdot e_1 = e_2 \\
		e_1 \circ e_1 = e_3, \quad e_1 \circ e_2 = e_2
	\end{cases}
	$

	\item \textbf{NP57:}
	$
	\begin{cases}
		e_1 \cdot e_1 = e_2 \\
		e_1 \circ e_1 = e_3
	\end{cases}
	$

	\item \textbf{NP58$_{\alpha}$:}
	$
	\begin{cases}
		e_1 \cdot e_1 = e_2 \\
		e_1 \circ e_1 = \alpha e_2
	\end{cases}
	$

	\item \textbf{NP59$_{\alpha}$:}
	$
	\begin{cases}
		e_1 \cdot e_1 = e_2 \\
		e_1 \circ e_1 = \alpha e_2, \quad e_1 \circ e_3 = e_1+e_2, \quad e_2 \circ e_3 = e_2, \quad e_3 \circ e_2 = -e_2, \quad e_3 \circ e_3 = e_3
	\end{cases}
	$

	\item \textbf{NP60$_{\alpha}$:}
	$
	\begin{cases}
		e_1 \cdot e_1 = e_2 \\
		e_1 \circ e_1 = \alpha e_2, \quad e_1 \circ e_3 = e_1, \quad e_2 \circ e_3 = e_2, \quad e_3 \circ e_2 = -e_2, \quad e_3 \circ e_3 = e_3
	\end{cases}
	$

	\item \textbf{NP61$_{\alpha}^{\alpha \neq 1,2}$:}
	$
	\begin{cases}
		e_1 \cdot e_1 = e_2 \\
		e_1 \circ e_1 = \alpha e_1, \quad e_1 \circ e_2 = (2-\alpha) e_2, \quad e_1 \circ e_3 = e_3, \quad e_2 \circ e_1 = \alpha e_2,\\ e_3 \circ e_1 = \alpha e_3, \quad e_3 \circ e_3 = e_2
	\end{cases}
	$

	\item \textbf{NP62$_{\alpha}$:}
	$
	\begin{cases}
		e_1 \cdot e_1 = e_2 \\
		e_1 \circ e_1 = e_1, \quad e_1 \circ e_2 = e_2, \quad e_1 \circ e_3 = \alpha e_2+e_3, \quad e_2 \circ e_1 = e_2, \\ e_3 \circ e_1 = e_3, \quad e_3 \circ e_3 = e_2
	\end{cases}
	$

	\item \textbf{NP63$_{\alpha}$:}
	$
	\begin{cases}
		e_1 \cdot e_1 = e_2 \\
		e_1 \circ e_1 = 2e_1+\alpha e_2, \quad e_1 \circ e_3 = e_3, \quad e_2 \circ e_1 = 2e_2, \quad e_3 \circ e_1 = 2e_3, \quad e_3 \circ e_3 = e_2
	\end{cases}
	$

	\item \textbf{NP64:}
	$
	\begin{cases}
		e_1 \cdot e_1 = e_2 \\
		e_1 \circ e_1 = e_1, \quad e_1 \circ e_2 = -e_2, \quad e_2 \circ e_1 = e_2, \quad e_3 \circ e_1 = e_3, \quad e_3 \circ e_3 = e_2
	\end{cases}
	$

	\item \textbf{NP65$_{\alpha,\beta}$:}
	$
	\begin{cases}
		e_1 \cdot e_1 = e_2 \\
		e_1 \circ e_1 = \alpha e_2, \quad e_1 \circ e_3 = \beta e_2, \quad e_3 \circ e_3 = e_2
	\end{cases}
	$

	\item \textbf{NP66$_{\alpha}$:}
	$
	\begin{cases}
		e_1 \cdot e_1 = e_2 \\
		e_1 \circ e_1 = \alpha e_1, \quad e_1 \circ e_2 = e_2, \quad e_2 \circ e_1 = \alpha e_2, \quad e_3 \circ e_3 = e_3
	\end{cases}
	$

	\item \textbf{NP67$_{\alpha}$:}
	$
	\begin{cases}
		e_1 \cdot e_1 = e_2 \\
		e_1 \circ e_1 = e_1+\alpha e_2, \quad e_2 \circ e_1 = e_2, \quad e_3 \circ e_3 = e_3
	\end{cases}
	$

	\item \textbf{NP68$_{\alpha}$:}
	$
	\begin{cases}
		e_1 \cdot e_1 = e_2 \\
		e_1 \circ e_1 = \alpha e_2, \quad e_3 \circ e_3 = e_3
	\end{cases}
	$

	\item \textbf{NP69$_{\alpha}$:}
	$
	\begin{cases}
		e_1 \cdot e_1 = e_2 \\
		e_1 \circ e_1 = e_1+\alpha e_3, \quad e_1 \circ e_2 = -e_2, \quad e_2 \circ e_1 = e_2, \quad e_3 \circ e_1 = e_2+e_3
	\end{cases}
	$

	\item \textbf{NP70$_{\alpha}$:}
	$
	\begin{cases}
		e_1 \cdot e_1 = e_2 \\
		e_1 \circ e_1 = e_3, \quad e_1 \circ e_3 = \alpha e_2, \quad e_3 \circ e_1 = e_2
	\end{cases}
	$

	\item \textbf{NP71$_{\alpha}^{\alpha \neq -1}$:}
	$
	\begin{cases}
		e_1 \cdot e_1 = e_2 \\
		e_1 \circ e_3 = \alpha e_2, \quad e_3 \circ e_1 = e_2
	\end{cases}
	$

	\item \textbf{NP72$_{\alpha}$:}
	$
	\begin{cases}
		e_1 \cdot e_1 = e_2 \\
		e_1 \circ e_1 = \alpha e_2, \quad e_1 \circ e_3 = -e_2, \quad e_3 \circ e_1 = e_2
	\end{cases}
	$

	\item \textbf{NP73$_{\alpha}^{\alpha \neq 0}$:}
	$
	\begin{cases}
		e_1 \cdot e_1 = e_2 \\
		e_1 \circ e_1 = e_1, \quad e_1 \circ e_2 = \alpha e_2, \quad e_2 \circ e_1 = e_2
	\end{cases}
	$

	\item \textbf{NP74$_{\alpha}$:}
	$
	\begin{cases}
		e_1 \cdot e_1 = e_2 \\
		e_1 \circ e_1 = e_1+\alpha e_2, \quad e_2 \circ e_1 = e_2
	\end{cases}
	$
\\    
$\mathbf{NP01}^{(\alpha_1, \beta_1)} \cong \mathbf{NP01}^{(\alpha_2, \beta_2)}$ if and only if $(\alpha_1, \beta_1) = (\alpha_2, \beta_2)$ or $(\alpha_1, \beta_1) = (-\beta_2, -\alpha_2)$. \\
$\mathbf{NP02}^{\alpha_1} \cong \mathbf{NP02}^{\alpha_2}$ if and only if $\alpha_1 = \alpha_2$. \\
$\mathbf{NP03}^{\alpha_1} \cong \mathbf{NP03}^{\alpha_2}$ if and only if $\alpha_1 = \alpha_2$. \\
$\mathbf{NP04}^{(\alpha_1, \beta_1, 0)} \cong \mathbf{NP04}^{(\alpha_2, \beta_2, 0)}$ if and only if $(\alpha_1, \beta_1) = (\alpha_2, \beta_2)$. \\
$\mathbf{NP04}^{(\alpha_1, \beta_1, \gamma_1\neq 0)} \cong \mathbf{NP04}^{(\alpha_2, \beta_2, \gamma_2\neq 0)}$ if and only if $(\alpha_1, \beta_1,\gamma_1) = (\alpha_2, \beta_2,\gamma_2)$ or $(\alpha_1, \beta_1,\gamma_1) = (\beta_2, \alpha_2,\gamma_2)$. \\
$\mathbf{NP05}^{(\alpha_1, \beta_1)} \cong \mathbf{NP05}^{(\alpha_2, \beta_2)}$ if and only if $(\alpha_1, \beta_1) = (\alpha_2, \beta_2)$ or $(\alpha_1, \beta_1) = ( \beta_2, \alpha_2)$. \\
$\mathbf{NP06}^{(\alpha_1, \beta_1)} \cong \mathbf{NP06}^{(\alpha_2, \beta_2)}$ if and only if $(\alpha_1, \beta_1) = (\alpha_2, \beta_2)$ ($\alpha \neq 2$). \\
$\mathbf{NP07}^{(\alpha_1, \beta_1)} \cong \mathbf{NP07}^{(\alpha_2, \beta_2)}$ if and only if $(\alpha_1, \beta_1) = (\alpha_2, \beta_2)$. \\
$\mathbf{NP08}^{\alpha_1} \cong \mathbf{NP08}^{\alpha_2}$ if and only if $\alpha_1 = \alpha_2$. \\
$\mathbf{NP09}^{(\alpha_1, \beta_1)} \cong \mathbf{NP09}^{(\alpha_2, \beta_2)}$ if and only if $(\alpha_1, \beta_1) = (\alpha_2, \beta_2)$. \\
$\mathbf{NP10}^{\alpha_1} \cong \mathbf{NP10}^{\alpha_2}$ if and only if $\alpha_1 = \alpha_2$. \\
$\mathbf{NP11}^{(\alpha_1, \beta_1, \gamma_1)} \cong \mathbf{NP11}^{(\alpha_2, \beta_2, \gamma_2)}$ if and only if $(\alpha_1, \beta_1, \gamma_1) = (\alpha_2, \beta_2, \gamma_2)$. \\
$\mathbf{NP12}^{(\alpha_1, \beta_1, \gamma_1)} \cong \mathbf{NP12}^{(\alpha_2, \beta_2, \gamma_2)}$ if and only if $(\alpha_1, \beta_1, \gamma_1) = (\alpha_2, \beta_2, \gamma_2)$. \\
$\mathbf{NP13}^{(\alpha_1, \beta_1)} \cong \mathbf{NP13}^{(\alpha_2, \beta_2)}$ if and only if $(\alpha_1, \beta_1) = (\alpha_2, \beta_2)$. \\
$\mathbf{NP14}^{(\alpha_1, \beta_1, \gamma_1)} \cong \mathbf{NP14}^{(\alpha_2, \beta_2, \gamma_2)}$ if and only if $(\alpha_1, \beta_1, \gamma_1) = (\alpha_2, \beta_2, \gamma_2)$. \\
$\mathbf{NP15}^{(\alpha_1, \beta_1)} \cong \mathbf{NP15}^{(\alpha_2, \beta_2)}$ if and only if $(\alpha_1, \beta_1) = (\alpha_2, \beta_2)$. \\
$\mathbf{NP16}^{(\alpha_1, \beta_1)} \cong \mathbf{NP16}^{(\alpha_2, \beta_2)}$ if and only if $(\alpha_1, \beta_1) = (\alpha_2, \beta_2)$. \\
$\mathbf{NP17}^{(\alpha_1, \beta_1)} \cong \mathbf{NP17}^{(\alpha_2, \beta_2)}$ if and only if $(\alpha_1, \beta_1) = (\alpha_2, \beta_2)$. \\
$\mathbf{NP18}^{(\alpha_1, \beta_1)} \cong \mathbf{NP18}^{(\alpha_2, \beta_2)}$ if and only if $(\alpha_1, \beta_1) = (\alpha_2, \beta_2)$. \\
$\mathbf{NP19}^{(\alpha_1, \beta_1)} \cong \mathbf{NP19}^{(\alpha_2, \beta_2)}$ if and only if $(\alpha_1, \beta_1) = (\alpha_2, \beta_2)$. \\
$\mathbf{NP20}^{(\alpha_1, \beta_1,\gamma_1)} \cong \mathbf{NP20}^{(\alpha_2, \beta_2,\gamma_2)}$ if and only if $(\alpha_1, \beta_1,\gamma_1) = (\alpha_2, \beta_2,\gamma_2)$. \\
$\mathbf{NP21}^{(\alpha_1, \beta_1)} \cong \mathbf{NP21}^{(\alpha_2, \beta_2)}$ if and only if $(\alpha_1, \beta_1) = (\alpha_2, \beta_2)$. \\
$\mathbf{NP22}^{(\alpha_1, \beta_1)} \cong \mathbf{NP22}^{(\alpha_2, \beta_2)}$ if and only if $(\alpha_1, \beta_1) = (\alpha_2, \pm \beta_2)$ ($\beta \neq 0$). \\
$\mathbf{NP23}^{(\alpha_1, \beta_1)} \cong \mathbf{NP23}^{(\alpha_2, \beta_2)}$ if and only if $(\alpha_1, \beta_1) = (\alpha_2, \beta_2)$. \\
$\mathbf{NP24}^{\alpha_1} \cong \mathbf{NP24}^{\alpha_2}$ if and only if $\alpha_1 = \alpha_2$. \\
$\mathbf{NP25}^{(\alpha_1, \beta_1, \gamma_1)} \cong \mathbf{NP25}^{(\alpha_2, \beta_2, \gamma_2)}$ if and only if $(\alpha_1, \beta_1, \gamma_1)$ is a permutation of $(\alpha_2, \beta_2, \gamma_2)$. \\
$\mathbf{NP26}^{(\alpha_1, \beta_1, \gamma_1)} \cong \mathbf{NP26}^{(\alpha_2, \beta_2, \gamma_2)}$ if and only if $(\alpha_1, \beta_1, \gamma_1) = (\alpha_2, \beta_2, \gamma_2)$. \\
$\mathbf{NP27}^{(\alpha_1, \beta_1, \gamma_1)} \cong \mathbf{NP27}^{(\alpha_2, \beta_2, \gamma_2)}$ if and only if $(\alpha_1, \beta_1, \gamma_1) = (\alpha_2, \beta_2, \gamma_2)$. \\
$\mathbf{NP28}^{(\alpha_1, \beta_1)} \cong \mathbf{NP28}^{(\alpha_2, \beta_2)}$ if and only if $(\alpha_1, \beta_1) = (\alpha_2, \beta_2)$ or $(\alpha_1, \beta_1) = ( \beta_2,\alpha_2)$.\\
$\mathbf{NP29}^{(\alpha_1, \beta_1)} \cong \mathbf{NP29}^{(\alpha_2, \beta_2)}$ if and only if $(\alpha_1, \beta_1) = (\alpha_2, \beta_2)$ or $(\alpha_1, \beta_1) = (\beta_2,\alpha_2)$. \\
$\mathbf{NP30}^{(\alpha_1, \beta_1)} \cong \mathbf{NP30}^{(\alpha_2, \beta_2)}$ if and only if $(\alpha_1, \beta_1) = ( \alpha_2, \beta_2)$. \\
$\mathbf{NP31}^{(\alpha_1, \beta_1)} \cong \mathbf{NP31}^{(\alpha_2, \beta_2)}$ if and only if $(\alpha_1, \beta_1) = (\alpha_2, \beta_2)$. \\
$\mathbf{NP32}^{(\alpha_1, \beta_1)} \cong \mathbf{NP32}^{(\alpha_2, \beta_2)}$ if and only if $(\alpha_1, \beta_1) = (\alpha_2, \beta_2)$. \\
$\mathbf{NP33}^{(\alpha_1,\beta_1)} \cong \mathbf{NP33}^{(\alpha_2,\beta_2)}$ 
if and only if $(\alpha_1,\beta_1)=(\alpha_2,\beta_2)$. \\
$\mathbf{NP34}^{\alpha_1} \cong \mathbf{NP34}^{\alpha_2}$ if and only if $\alpha_1 = \alpha_2$. \\
$\mathbf{NP35}^{\alpha_1} \cong \mathbf{NP35}^{\alpha_2}$ if and only if $\alpha_1 = \alpha_2$. \\
$\mathbf{NP36}^{\alpha_1} \cong \mathbf{NP36}^{\alpha_2}$ if and only if $\alpha_1 = \alpha_2$. \\
$\mathbf{NP37}^{\alpha_1} \cong \mathbf{NP37}^{\alpha_2}$ if and only if $\alpha_1 = \alpha_2$. \\
$\mathbf{NP38}^{\alpha_1} \cong \mathbf{NP38}^{\alpha_2}$ if and only if $\alpha_1 = \alpha_2$. \\
$\mathbf{NP39}^{\alpha_1} \cong \mathbf{NP39}^{\alpha_2}$ if and only if $\alpha_1 = \alpha_2$. \\
$\mathbf{NP40}^{(\alpha_1, \beta_1)} \cong \mathbf{NP40}^{(\alpha_2, \beta_2)}$ if and only if $(\alpha_1, \beta_1) = (\alpha_2, \beta_2)$ ($\alpha \neq \beta$). \\
$\mathbf{NP41}^{\alpha_1} \cong \mathbf{NP41}^{\alpha_2}$ if and only if $\alpha_1 = \alpha_2$. \\
$\mathbf{NP42}^{\alpha_1} \cong \mathbf{NP42}^{\alpha_2}$ if and only if $\alpha_1 = \alpha_2$. \\
$\mathbf{NP43}^{(\alpha_1, \beta_1)} \cong \mathbf{NP43}^{(\alpha_2, \beta_2)}$ if and only if $(\alpha_1, \beta_1) = (\alpha_2, \beta_2)$ ($\alpha \neq 0$). \\
$\mathbf{NP44}^{(\alpha_1, \beta_1)} \cong \mathbf{NP44}^{(\alpha_2, \beta_2)}$ if and only if $(\alpha_1, \beta_1) = (\alpha_2, \beta_2)$ ($\beta \neq 0$). \\
$\mathbf{NP45}^{(\alpha_1, \beta_1)} \cong \mathbf{NP45}^{(\alpha_2, \beta_2)}$ if and only if $(\alpha_1, \beta_1) = (\alpha_2, \beta_2)$. \\
$\mathbf{NP46}^{\alpha_1} \cong \mathbf{NP46}^{\alpha_2}$ if and only if $\alpha_1 = \alpha_2$ ($\alpha \neq 0$). \\
$\mathbf{NP47}^{\alpha_1} \cong \mathbf{NP47}^{\alpha_2}$ if and only if $\alpha_1 = \alpha_2$. \\
$\mathbf{NP48}^{\alpha_1} \cong \mathbf{NP48}^{\alpha_2}$ if and only if $\alpha_1 = \alpha_2$. \\
$\mathbf{NP49}^{\alpha_1} \cong \mathbf{NP49}^{\alpha_2}$ if and only if $\alpha_1 = \alpha_2$. \\
$\mathbf{NP51}^{\alpha_1} \cong \mathbf{NP51}^{\alpha_2}$ if and only if $\alpha_1 = \alpha_2$. \\
$\mathbf{NP52}^{\alpha_1} \cong \mathbf{NP52}^{\alpha_2}$ if and only if $\alpha_1 =\pm \alpha_2$. \\
$\mathbf{NP55}^{\alpha_1} \cong \mathbf{NP55}^{\alpha_2}$ if and only if $\alpha_1 = \alpha_2$. \\
$\mathbf{NP58}^{\alpha_1} \cong \mathbf{NP58}^{\alpha_2}$ if and only if $\alpha_1 = \alpha_2$. \\
$\mathbf{NP59}^{\alpha_1} \cong \mathbf{NP59}^{\alpha_2}$ if and only if $\alpha_1 = \alpha_2$. \\
$\mathbf{NP60}^{\alpha_1} \cong \mathbf{NP60}^{\alpha_2}$ if and only if $\alpha_1 = \alpha_2$. \\
$\mathbf{NP61}^{\alpha_1} \cong \mathbf{NP61}^{\alpha_2}$ if and only if $\alpha_1 = \alpha_2$ ($\alpha \neq 1,2$). \\
$\mathbf{NP62}^{\alpha_1} \cong \mathbf{NP62}^{\alpha_2}$ if and only if $\alpha_1 = \pm \alpha_2$. \\
$\mathbf{NP63}^{\alpha_1} \cong \mathbf{NP63}^{\alpha_2}$ if and only if $\alpha_1 = \alpha_2$. \\
$\mathbf{NP65}^{(\alpha_1, \beta_1)} \cong \mathbf{NP65}^{(\alpha_2, \beta_2)}$ if and only if $(\alpha_1, \beta_1) = (\alpha_2, \pm \beta_2)$. \\
$\mathbf{NP66}^{\alpha_1} \cong \mathbf{NP66}^{\alpha_2}$ if and only if $\alpha_1 = \alpha_2$. \\
$\mathbf{NP67}^{\alpha_1} \cong \mathbf{NP67}^{\alpha_2}$ if and only if $\alpha_1 = \alpha_2$. \\
$\mathbf{NP68}^{\alpha_1} \cong \mathbf{NP68}^{\alpha_2}$ if and only if $\alpha_1 = \alpha_2$. \\
$\mathbf{NP69}^{\alpha_1} \cong \mathbf{NP69}^{\alpha_2}$ if and only if $\alpha_1 = \alpha_2$. \\
$\mathbf{NP70}^{\alpha_1} \cong \mathbf{NP70}^{\alpha_2}$ if and only if $\alpha_1 = \alpha_2$. \\
$\mathbf{NP71}^{\alpha_1} \cong \mathbf{NP71}^{\alpha_2}$ if and only if $\alpha_1 = \alpha_2$ ($\alpha \neq -1$). \\
$\mathbf{NP72}^{\alpha_1} \cong \mathbf{NP72}^{\alpha_2}$ if and only if $\alpha_1 = \alpha_2$. \\
$\mathbf{NP73}^{\alpha_1} \cong \mathbf{NP73}^{\alpha_2}$ if and only if $\alpha_1 = \alpha_2$ ($\alpha \neq 0$). \\
$\mathbf{NP74}^{\alpha_1} \cong \mathbf{NP74}^{\alpha_2}$ if and only if $\alpha_1 = \alpha_2$.

\end{enumerate}

\end{thm}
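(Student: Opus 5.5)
The proof runs through the method set up in Section~\ref{sec:method}: fix the commutative associative structure, solve for the compatible Novikov products, and then classify those products up to automorphisms of the fixed associative algebra.

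\textbf{Step 1: reduce to a fixed associative algebra.} Any nonzero Novikov--Poisson algebra $(A,\cdot,\circ)$ has $(A,\cdot)$ isomorphic either to one of $\mathrm{A}_{1,1},\dots,\mathrm{A}_{1,11}$ of Theorem~\ref{thm:class} or to the trivial algebra with $\cdot=0$. By the transport theorem preceding Theorem~\ref{thm:class}, we may assume $(A,\cdot)$ is literally one of these model algebras with its fixed basis. The case $\cdot=0$ must be examined separately. There \eqref{eq:NP3} forces nothing new and \eqref{eq:NP4} says that $\circ$ is commutative up to terms annihilated by $\cdot$, which is vacuous. So this case is the classification of all 3-dimensional Novikov algebras. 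Since the list contains no family with $\cdot=0$, I expect that either the theorem implicitly requires $\cdot\neq0$ (matching ``nonzero multiplication'' in Theorem~\ref{thm:class}), or these are deliberately excluded. I would state this convention explicitly and restrict to $\cdot\neq 0$.

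\textbf{Step 2: compute $\mathcal N'_{\mathrm{NP}}$ for each model algebra.} For each $\mathrm{A}_{1,j}$, I write $L_{e_1},L_{e_2},L_{e_3}$ as $3\times3$ matrices with $27$ unknowns and impose the system \eqref{eq:NP-polynomial-system} of Remark~\ref{rem:first}. The linear identities \eqref{eq:NP3} and \eqref{eq:NP4} should be used first, since they are linear in the unknowns. For algebras with an idempotent, such as $\mathrm{A}_{1,3}$--$\mathrm{A}_{1,10}$, identity \eqref{eq:NP3} gives $L_{e\cdot x}=e\cdot L_x$. This typically forces $L_x$ for $x$ in the ideal $A\cdot A$ to be determined by a few parameters. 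It also makes the Peirce decomposition with respect to the idempotents compatible with $\circ$. For instance, for $\mathrm{A}_{1,6}$ one gets $e_i\circ e_j=0$ for $i\neq j$, which yields NP25 directly. Only after this do I impose the quadratic left-symmetry and right-commutativity identities. These split into finitely many branches, best organized by rank or by vanishing of key parameters, and can be checked with a Gröbner basis or computer algebra.

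\textbf{Step 3: isomorphism reduction and testing.} For each model algebra, I compute $\operatorname{Aut}(\mathrm{A}_{1,j},\cdot)$ explicitly. This group is small: a torus times a unipotent part, together with finitely many permutations in the semisimple cases $\mathrm{A}_{1,6}$ and $\mathrm{A}_{1,8}$. Using Proposition~\ref{prop:orbit-criterion} and the matrix form \eqref{eq:NP-isomorphism-matrix}, I act on each solution branch and normalize parameters: unipotent parameters kill off-diagonal entries, and torus scalings send a nonzero coefficient to $1$. This produces the representatives NP01--NP74. For isomorphism testing, I solve \eqref{eq:NP-isomorphism-matrix} between two members of a family with general automorphism matrix $\varphi$. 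This recovers exactly the residual symmetries listed at the end of the theorem, for example the swaps in NP05, NP25, NP28 and NP29 and the sign changes $\beta\mapsto\pm\beta$ in NP22 and NP65. Distinct families over the same associative algebra are separated by invariants such as the dimension of $A\circ A$, the Lie algebra given by the commutator of $\circ$, and the eigenvalues of $L_{e}$ for idempotents $e$. Families over different $\mathrm{A}_{1,j}$ are automatically non-isomorphic.

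\textbf{Main obstacle.} The hardest part is Step 3 for the nilpotent and non-unital algebras $\mathrm{A}_{1,1}$, $\mathrm{A}_{1,2}$ and $\mathrm{A}_{1,11}$. Their automorphism groups are large and the solution varieties have many components, as the roughly 35 families over $\mathrm{A}_{1,11}$ show. The difficulty is ensuring that the case split is exhaustive and non-redundant, including the boundary exclusions such as $\alpha\neq2$ in NP06, $\alpha\neq1,2$ in NP61 and $\alpha\neq-1$ in NP71. My first approach is to stratify by the Jordan type of $L_{e_1}$, which is an automorphism invariant up to adding nilpotent corrections, and then normalize within each stratum.
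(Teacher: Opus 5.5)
Your plan follows the paper's method: transport to the model algebras of Theorem~\ref{thm:class}, solve \eqref{eq:NP-polynomial-system} starting from the linear identities \eqref{eq:NP3}--\eqref{eq:NP4}, then normalize and test via \eqref{eq:NP-isomorphism-matrix} over $\operatorname{Aut}(A,\cdot)$. The paper carries this out in full only for $\mathrm{A}_{1,7}$, your $\mathrm{A}_{1,6}$ computation is the analogous sample, and your exclusion of $\cdot=0$ matches the paper's implicit convention of working only from the nonzero-multiplication list. One correction to your expectations, harmless since you compute these groups explicitly anyway: $\operatorname{Aut}(\mathrm{A}_{1,3})\cong\operatorname{Aut}(\mathrm{A}_{1,10})\cong GL_2(\mathbb C)$ is not a torus times a unipotent group, and $\operatorname{Aut}(\mathrm{A}_{1,1})$ contains the swap $e_1\leftrightarrow e_3$, which is what produces the NP04/NP05 symmetries you cite.
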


\begin{proof}
We give the complete argument for the commutative associative algebra \(\mathrm{A}_{1,7}\). The classifications of transposed Poisson algebras over the remaining commutative associative algebras are similar. According to Theorem~\ref{thm:class}, its nonzero products are
\[
e_1\cdot e_1=e_1,\qquad e_2\cdot e_2=e_2,\qquad
e_1\cdot e_3=e_3.
\]
We use the classification method described in Section~\ref{sec:method}. We first solve the polynomial system for all Novikov--Poisson products, then perform isomorphism reduction, and finally carry out isomorphism testing.

By Remark~\ref{rem:first}, all Novikov--Poisson products on $(\mathrm A_{1,7},\cdot)$ are obtained by solving
\eqref{eq:NP-polynomial-system}. Let
\[
L_{e_1}=
\begin{pmatrix}
a_{11}&a_{12}&a_{13}\\
a_{21}&a_{22}&a_{23}\\
a_{31}&a_{32}&a_{33}
\end{pmatrix},\quad
L_{e_2}=
\begin{pmatrix}
b_{11}&b_{12}&b_{13}\\
b_{21}&b_{22}&b_{23}\\
b_{31}&b_{32}&b_{33}
\end{pmatrix},\quad
L_{e_3}=
\begin{pmatrix}
c_{11}&c_{12}&c_{13}\\
c_{21}&c_{22}&c_{23}\\
c_{31}&c_{32}&c_{33}
\end{pmatrix}
\]
be the left multiplication matrices of a bilinear product $\circ$, where $L_{e_i}(e_j)=e_i\circ e_j$.

The third family of equations in \eqref{eq:NP-polynomial-system}, which is the matrix form of the compatibility identity \eqref{eq:NP3}, gives
\[
a_{21}=a_{22}=a_{23}=0,\qquad
b_{11}=b_{12}=b_{13}=b_{31}=b_{32}=b_{33}=0,
\]
and
\[
c_{11}=c_{12}=c_{13}=c_{21}=c_{22}=c_{23}=0,\qquad
c_{31}=a_{11},\quad c_{32}=a_{12},\quad c_{33}=a_{13}.
\]
Consequently,
\[
L_{e_1}=
\begin{pmatrix}
a_{11}&a_{12}&a_{13}\\
0&0&0\\
a_{31}&a_{32}&a_{33}
\end{pmatrix},\quad
L_{e_2}=
\begin{pmatrix}
0&0&0\\
b_{21}&b_{22}&b_{23}\\
0&0&0
\end{pmatrix},\quad
L_{e_3}=
\begin{pmatrix}
0&0&0\\
0&0&0\\
a_{11}&a_{12}&a_{13}
\end{pmatrix}.
\]

We next substitute these matrices into the fourth family of equations in \eqref{eq:NP-polynomial-system}, which is the matrix form of \eqref{eq:NP4}. For $(i,j,k)=(1,2,1),(2,3,2),(1,3,3)$, respectively, we obtain
\[
a_{12}e_1+a_{32}e_3=-b_{21}e_2,\qquad
b_{23}e_2=-a_{12}e_3,\qquad
a_{13}e_3=-a_{13}e_3.
\]
Since $\{e_1,e_2,e_3\}$ is linearly independent and the ground field is $\mathbb C$, these equations imply
\[
a_{12}=a_{13}=a_{32}=b_{21}=b_{23}=0.
\]
The remaining equations in the fourth family are then satisfied
identically. Hence
\[
L_{e_1}^{(1)}=
\begin{pmatrix}
c_{31}&0&0\\
0&0&0\\
a_{31}&0&a_{33}
\end{pmatrix},\quad
L_{e_2}^{(1)}=
\begin{pmatrix}
0&0&0\\
0&b_{22}&0\\
0&0&0
\end{pmatrix},\quad
L_{e_3}^{(1)}=
\begin{pmatrix}
0&0&0\\
0&0&0\\
c_{31}&0&0
\end{pmatrix}.
\]

Substituting these matrices into the first two families of equations in \eqref{eq:NP-polynomial-system}, corresponding to the Novikov identities \eqref{eq:novikov-left-symmetric} and \eqref{eq:novikov-right-commutative}, shows that every remaining coordinate polynomial vanishes identically. Thus the complete set of
Novikov--Poisson products on $\mathrm A_{1,7}$ is
\[
e_1\circ e_1=c_{31}e_1+a_{31}e_3,\qquad
e_1\circ e_3=a_{33}e_3,\qquad
e_2\circ e_2=b_{22}e_2,\qquad
e_3\circ e_1=c_{31}e_3,
\]
where $a_{31},a_{33},b_{22},c_{31}\in\mathbb C$ are arbitrary and all omitted products are zero.

We now perform isomorphism reduction in the sense described above. By Proposition~\ref{prop:orbit-criterion}, two products obtained above define isomorphic Novikov--Poisson algebras if and only if they belong to the same
$\operatorname{Aut}(A,\cdot)$-orbit. Let
\[
\varphi=
\begin{pmatrix}
x_{11}&x_{12}&x_{13}\\
x_{21}&x_{22}&x_{23}\\
x_{31}&x_{32}&x_{33}
\end{pmatrix}
\]
be an automorphism of $(A,\cdot)$. Solving
\[
\varphi(e_i\cdot e_j)=\varphi(e_i)\cdot\varphi(e_j),
\qquad i,j\in\{1,2,3\},\qquad \det\varphi\neq0,
\]
gives
\[
x_{11}=x_{22}=1,\qquad
x_{12}=x_{13}=x_{21}=x_{23}=x_{31}=x_{32}=0,\qquad
x_{33}\neq0.
\]
Therefore,
\[
\operatorname{Aut}(A,\cdot)=
\left\{
\varphi_{x_{33}}=
\begin{pmatrix}
1&0&0\\
0&1&0\\
0&0&x_{33}
\end{pmatrix}
\ \middle|\ 
x_{33}\in\mathbb C^*
\right\}.
\]

Fix the parameter tuple
\[
p=(a_{31},a_{33},b_{22},c_{31})
\]
of the structure defined by $L_{e_i}^{(1)}$, and let
\[
q=(aa_{31},aa_{33},bb_{22},cc_{31})
\]
be the parameter tuple of another member of the same family, with matrices
\[
\widetilde L_{e_1}=
\begin{pmatrix}
cc_{31}&0&0\\
0&0&0\\
aa_{31}&0&aa_{33}
\end{pmatrix},\quad
\widetilde L_{e_2}=
\begin{pmatrix}
0&0&0\\
0&bb_{22}&0\\
0&0&0
\end{pmatrix},\quad
\widetilde L_{e_3}=
\begin{pmatrix}
0&0&0\\
0&0&0\\
cc_{31}&0&0
\end{pmatrix}.
\]
In accordance with the isomorphism reduction method, we regard $p$ as fixed and treat $(\varphi,q)$ as the unknowns. By Remark~\ref{rem:second}, the orbit condition in Proposition~\ref{prop:orbit-criterion} is equivalent to \eqref{eq:NP-isomorphism-matrix}. An isomorphism from the structure defined by $\widetilde L_{e_i}$ to the one defined by $L_{e_i}^{(1)}$ therefore satisfies
\[
\left(\sum_{r=1}^{3}x_{ri}L_{e_r}^{(1)}\right)\varphi
=
\varphi\widetilde L_{e_i},
\qquad i=1,2,3.
\]
Substituting
\[
\varphi=\varphi_{x_{33}}
=
\begin{pmatrix}
1&0&0\\
0&1&0\\
0&0&x_{33}
\end{pmatrix}
\]
into these equations gives
\[
\begin{aligned}
&c_{31}-cc_{31}=0,\qquad
a_{31}-x_{33}aa_{31}=0,\qquad
x_{33}(a_{33}-aa_{33})=0,\\
&b_{22}-bb_{22}=0,\qquad
x_{33}(c_{31}-cc_{31})=0,\qquad x_{33}\neq0.
\end{aligned}
\]
To express the nonzero condition by a polynomial equation, introduce an auxiliary variable $D_1$ and add
\[
D_1x_{33}+1=0.
\]
Solving this system for $x_{33},D_1$ and the entries of $q$ gives
\[
aa_{31}=\frac{a_{31}}{x_{33}},\qquad
aa_{33}=a_{33},\qquad
bb_{22}=b_{22},\qquad
cc_{31}=c_{31},\qquad
D_1=-\frac{1}{x_{33}}.
\]

The relation
\[
aa_{31}=\frac{a_{31}}{x_{33}}
\]
is case (ii) of the isomorphism reduction method, since the new parameter $aa_{31}$ depends on both the fixed parameter $a_{31}$ and the automorphism. By contrast,
\[
aa_{33}=a_{33},\qquad bb_{22}=b_{22},\qquad cc_{31}=c_{31}
\]
are case (iii), so these three parameters cannot be reduced further within the family.

If $a_{31}\neq0$, we solve $aa_{31}=1$ by taking $x_{33}=a_{31}$. The corresponding automorphism is
\[
\varphi_1=
\begin{pmatrix}
1&0&0\\
0&1&0\\
0&0&a_{31}
\end{pmatrix}.
\]
It gives an isomorphism from the normalized structure to the original structure, and its inverse transforms the original structure into the normal form
\[
\widetilde L_{e_1}=
\begin{pmatrix}
c_{31}&0&0\\
0&0&0\\
1&0&a_{33}
\end{pmatrix},\quad
\widetilde L_{e_2}=
\begin{pmatrix}
0&0&0\\
0&b_{22}&0\\
0&0&0
\end{pmatrix},\quad
\widetilde L_{e_3}=
\begin{pmatrix}
0&0&0\\
0&0&0\\
c_{31}&0&0
\end{pmatrix}.
\]
Hence every structure with $a_{31}\neq0$ is isomorphic to one in which $a_{31}=1$.

If $a_{31}=0$, then
\[
aa_{31}=\frac{a_{31}}{x_{33}}=0
\]
for every $x_{33}\in\mathbb C^*$. Thus the zero value cannot be changed by an automorphism, and the corresponding normal form is
\[
\widetilde L_{e_1}=
\begin{pmatrix}
c_{31}&0&0\\
0&0&0\\
0&0&a_{33}
\end{pmatrix},\quad
\widetilde L_{e_2}=
\begin{pmatrix}
0&0&0\\
0&b_{22}&0\\
0&0&0
\end{pmatrix},\quad
\widetilde L_{e_3}=
\begin{pmatrix}
0&0&0\\
0&0&0\\
c_{31}&0&0
\end{pmatrix}.
\]
This completes the isomorphism reduction and gives two normal-form families.

We next perform isomorphism testing in the sense described above. By Proposition~\ref{prop:orbit-criterion} and
Remark~\ref{rem:second}, every comparison is determined by \eqref{eq:NP-isomorphism-matrix}. We first fix two parameter tuples in the same normal-form family and solve the system with $\varphi\in\operatorname{Aut}(A,\cdot)$ as the unknown.

Put
\[
\alpha=c_{31},\qquad \beta=a_{33},\qquad \gamma=b_{22}.
\]
For two normalized structures in the nonzero branch with parameter tuples $(\alpha_1,\beta_1,\gamma_1)$ and $(\alpha_2,\beta_2,\gamma_2)$, the isomorphism equations give
\[
\alpha_2=\alpha_1,\qquad
\beta_2=\beta_1,\qquad
\gamma_2=\gamma_1,\qquad
1-x_{33}=0.
\]
Thus $x_{33}=1$, and two structures in this branch are isomorphic if and only if
\[
(\alpha_1,\beta_1,\gamma_1)
=
(\alpha_2,\beta_2,\gamma_2).
\]

For two structures in the zero branch, the isomorphism equations give
\[
\alpha_2=\alpha_1,\qquad
\beta_2=\beta_1,\qquad
\gamma_2=\gamma_1,
\]
while $x_{33}\in\mathbb C^*$ remains arbitrary. Hence two structures in the zero branch are isomorphic if and only if
\[
(\alpha_1,\beta_1,\gamma_1)
=
(\alpha_2,\beta_2,\gamma_2).
\]

It remains to perform the isomorphism testing between the two normal-form families. To determine whether they contain any isomorphic subfamilies, we regard the entries of $\varphi$ and both parameter tuples as unknowns in \eqref{eq:NP-isomorphism-matrix}. Taking a member of the zero branch as the source and a member of the nonzero branch as the target, the equation
\[
a_{31}-x_{33}aa_{31}=0
\]
becomes
\[
1-x_{33}\cdot0=0,
\]
which is impossible. This contradiction is independent of $\alpha,\beta,\gamma$, so the two families contain no isomorphic subfamilies.

Consequently, the isomorphism reduction gives the families
\[
\mathrm{NP26}_{\alpha,\beta,\gamma}:
\quad
\left\{
\begin{aligned}
&e_1\cdot e_1=e_1,\qquad
e_2\cdot e_2=e_2,\qquad
e_1\cdot e_3=e_3,\\
&e_1\circ e_1=\alpha e_1+e_3,\qquad
e_1\circ e_3=\beta e_3,\\
&e_2\circ e_2=\gamma e_2,\qquad
e_3\circ e_1=\alpha e_3,
\end{aligned}
\right.
\]
and
\[
\mathrm{NP27}_{\alpha,\beta,\gamma}:
\quad
\left\{
\begin{aligned}
&e_1\cdot e_1=e_1,\qquad
e_2\cdot e_2=e_2,\qquad
e_1\cdot e_3=e_3,\\
&e_1\circ e_1=\alpha e_1,\qquad
e_1\circ e_3=\beta e_3,\\
&e_2\circ e_2=\gamma e_2,\qquad
e_3\circ e_1=\alpha e_3,
\end{aligned}
\right.
\]
where $\alpha,\beta,\gamma\in\mathbb C$ and all omitted products are zero.

The isomorphism testing shows that
\[
\mathrm{NP26}_{\alpha_1,\beta_1,\gamma_1}
\cong
\mathrm{NP26}_{\alpha_2,\beta_2,\gamma_2}
\]
if and only if
\[
(\alpha_1,\beta_1,\gamma_1)
=
(\alpha_2,\beta_2,\gamma_2),
\]
and
\[
\mathrm{NP27}_{\alpha_1,\beta_1,\gamma_1}
\cong
\mathrm{NP27}_{\alpha_2,\beta_2,\gamma_2}
\]
if and only if
\[
(\alpha_1,\beta_1,\gamma_1)
=
(\alpha_2,\beta_2,\gamma_2).
\]
No member of $\mathrm{NP26}$ is isomorphic to a member of $\mathrm{NP27}$. This completes the classification over
$\mathrm A_{1,7}$.
\end{proof}

\section{Applications of the Classification of 3-Dimensional Novikov--Poisson Algebras}
\label{sec:constructions}

\subsection{3-Dimensional Transposed Poisson Algebras Induced by Novikov--Poisson Algebras}
\label{sec:construction}

We apply the classification obtained in Section~\ref{sec:method} to construct the corresponding transposed Poisson algebras.  We then establish their isomorphism correspondence with the existing classification and study the speciality of 3-dimensional transposed Poisson algebras.

\begin{defn}\cite{Bai}
A \emph{transposed Poisson algebra} is a triple $(A,\cdot,[\,,])$ such that $(A,\cdot)$ is a commutative associative algebra, $(A,[\,,])$ is a Lie algebra, and
\begin{equation}\label{eq:transposed-poisson}
2z\cdot[x,y]=[z\cdot x,y]+[x,z\cdot y]
\end{equation}
for all $x,y,z\in A$.
\end{defn}

\begin{thm}\cite{Bai}\label{thm:commutator-construction}
If $(A,\cdot,\circ)$ is a Novikov--Poisson algebra, then
\begin{equation}\label{eq:commutator}
[x,y]_{\circ}=x\circ y-y\circ x
\end{equation}
defines a transposed Poisson algebra $(A,\cdot,[\,,]_{\circ})$.
\end{thm}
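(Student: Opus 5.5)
We need to show that $[x,y]_\circ=x\circ y-y\circ x$ is a Lie bracket and that the transposed Leibniz rule \eqref{eq:transposed-poisson} holds. The plan is to handle the Lie part first, then to express the left-hand side of \eqref{eq:transposed-poisson} through \eqref{eq:NP3}, and finally to reduce the right-hand side to the same expression using \eqref{eq:NP3} and \eqref{eq:NP4}.

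\emph{Lie algebra.} Anticommutativity of $[\,,]_\circ$ is immediate. For the Jacobi identity we use the standard fact that the commutator of a left-symmetric product is a Lie bracket. Identity \eqref{eq:novikov-left-symmetric} says that the associator $(x,y,z)=(x\circ y)\circ z-x\circ(y\circ z)$ is symmetric in $x,y$. Expanding $\sum_{\mathrm{cyc}}[[x,y]_\circ,z]_\circ$ gives twelve terms, which regroup into a signed sum of associators. That sum cancels in pairs by the symmetry $(x,y,z)=(y,x,z)$. Only \eqref{eq:novikov-left-symmetric} is needed here; right commutativity \eqref{eq:novikov-right-commutative} is not used.

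\emph{Left-hand side of \eqref{eq:transposed-poisson}.} By \eqref{eq:NP3} and commutativity of $\cdot$,
\[
z\cdot[x,y]_\circ=(z\cdot x)\circ y-(z\cdot y)\circ x .
\]
Thus $2z\cdot[x,y]_\circ=2(z\cdot x)\circ y-2(z\cdot y)\circ x$.

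\emph{Right-hand side of \eqref{eq:transposed-poisson}.} Expanding the brackets,
\[
[z\cdot x,y]_\circ+[x,z\cdot y]_\circ
=(z\cdot x)\circ y-y\circ(z\cdot x)+x\circ(z\cdot y)-(z\cdot y)\circ x .
\]
Comparing with the left-hand side, it suffices to prove
\[
x\circ(z\cdot y)-y\circ(z\cdot x)=(z\cdot x)\circ y-(z\cdot y)\circ x .
\]
Apply \eqref{eq:NP4} with the third slot filled by $z$. Its left-hand side is $(x\circ y)\cdot z-(y\circ x)\cdot z$, which equals the left-hand side of the display after commuting $\cdot$. By \eqref{eq:NP3}, this is
\[
z\cdot(x\circ y)-z\cdot(y\circ x)=(z\cdot x)\circ y-(z\cdot y)\circ x,
\]
which is exactly the required right-hand side. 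This establishes \eqref{eq:transposed-poisson}.

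\emph{Main obstacle.} The only genuine difficulty is bookkeeping: keeping the arguments of \eqref{eq:NP4} in the right order, and seeing that the terms $2(z\cdot x)\circ y$ split evenly between the two sides. No case analysis or use of the classification in Theorem~\ref{thm:classification-np3} is required; the argument is purely identity-level and works over any field.
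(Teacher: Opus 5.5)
Your proof is correct. It is the standard direct verification: the paper gives no proof of its own and only cites \cite{Bai}, where the commutator is shown to be a Lie bracket via left-symmetry and $2z\cdot[x,y]_\circ=[z\cdot x,y]_\circ+[x,z\cdot y]_\circ$ is checked from \eqref{eq:NP3} and \eqref{eq:NP4} in the same way. One wording fix in your last step: it is the \emph{right}-hand side $x\circ(y\cdot z)-y\circ(x\cdot z)$ of \eqref{eq:NP4} that becomes the left-hand side of your display after commuting $\cdot$, while \eqref{eq:NP3} turns the left-hand side of \eqref{eq:NP4} into the required right-hand side.
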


A transposed Poisson algebra obtained from a Novikov--Poisson algebra by taking the commutator of its Novikov product is called special.  Special transposed Poisson algebras in dimension 2 were studied in \cite{BeitesFernandezKaygorodov}. We study the speciality of 3-dimensional transposed Poisson algebras.

\begin{prop}\label{prop:isomorphism-preserved}
If two Novikov--Poisson algebras $(A_1,\cdot_1,\circ_1)$ and $(A_2,\cdot_2,\circ_2)$ are isomorphic, then the transposed Poisson algebras constructed from them by Theorem~\ref{thm:commutator-construction} are also
isomorphic.
\end{prop}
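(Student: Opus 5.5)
The plan is to show that the very same linear map that gives the Novikov--Poisson isomorphism also gives the transposed Poisson isomorphism. Let $\varphi:A_1\to A_2$ be an isomorphism of Novikov--Poisson algebras, so $\varphi$ is a linear bijection with $\varphi(x\cdot_1 y)=\varphi(x)\cdot_2\varphi(y)$ and $\varphi(x\circ_1 y)=\varphi(x)\circ_2\varphi(y)$ for all $x,y\in A_1$. By Theorem~\ref{thm:commutator-construction}, the associated transposed Poisson algebras are $(A_1,\cdot_1,[\,,]_{\circ_1})$ and $(A_2,\cdot_2,[\,,]_{\circ_2})$, where the brackets are given by \eqref{eq:commutator}. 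We will check that $\varphi$ is an isomorphism between them.

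There are two things to verify. The commutative associative products are unchanged by the construction, so $\varphi$ preserves them by hypothesis. For the brackets, we use the linearity of $\varphi$ together with the fact that it preserves $\circ$:
\[
\varphi([x,y]_{\circ_1})=\varphi(x\circ_1 y)-\varphi(y\circ_1 x)=\varphi(x)\circ_2\varphi(y)-\varphi(y)\circ_2\varphi(x)=[\varphi(x),\varphi(y)]_{\circ_2}.
\]
Since $\varphi$ is bijective and preserves both operations, it is an isomorphism of transposed Poisson algebras.

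No step here is a genuine obstacle. The only point needing care is that isomorphism of transposed Poisson algebras means a linear bijection preserving both $\cdot$ and $[\,,]$, and both conditions have been checked.
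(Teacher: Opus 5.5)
Your proposal is correct, and it is exactly the straightforward verification the paper omits: the same map $\varphi$ preserves $\cdot$ by hypothesis, and it preserves the commutator bracket by linearity together with preservation of $\circ$. Nothing further is needed.
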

\begin{proof}
The proof is straightforward and therefore omitted.

\end{proof}

For each representative \(\mathrm{NP}ij\) in Theorem~\ref{thm:classification-np3}, let \(\mathrm{TP}ij\) denote the transposed Poisson algebra induced by \eqref{eq:commutator}. The following theorem gives the isomorphism correspondences between these induced algebras and the 3-dimensional transposed Poisson algebras classified in
\cite{Zhang-Z}.

\begin{thm}\label{thm:tp-correspondence}
The isomorphism correspondences between the transposed Poisson algebras \(\mathrm{TP}ij\) induced by Novikov--Poisson algebras and the classes in the existing classification of 3-dimensional transposed Poisson algebras are given in Table~\ref{tab:correspondence}. Here,
$
\mathcal Z=\{T\mid [\, ,\,]_T=0\}
$
denotes the collection of induced algebras with zero Lie bracket, rather than a single isomorphism class.
\end{thm}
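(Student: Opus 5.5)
The statement will be proved by a case-by-case computation over the 74 families of Theorem~\ref{thm:classification-np3}. For each representative $\mathrm{NP}ij$, first form $\mathrm{TP}ij=(A,\cdot,[\,,]_\circ)$ via \eqref{eq:commutator}. Theorem~\ref{thm:commutator-construction} guarantees that it is a transposed Poisson algebra. By Proposition~\ref{prop:isomorphism-preserved}, the isomorphism class of $\mathrm{TP}ij$ depends only on the class of $\mathrm{NP}ij$, so working with the normal forms loses nothing. Since $e_i\circ e_i$ cancels in the commutator, each bracket $[e_i,e_j]=e_i\circ e_j-e_j\circ e_i$ is read off from the tables directly. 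For example, for $\mathrm{NP01}_{\alpha,\beta}$ we get $[e_1,e_2]=-2e_2$, $[e_1,e_3]=-\alpha e_2$ and $[e_2,e_3]=2e_2$. First, sort out the families whose Novikov product is commutative, such as $\mathrm{NP25}$–$\mathrm{NP29}$, $\mathrm{NP37}$ and $\mathrm{NP58}$, and subfamilies such as $\mathrm{NP05}_{\alpha,\alpha}$ and $\mathrm{NP04}_{\alpha,\alpha,\gamma}$. Their brackets vanish, so they are placed in the collection $\mathcal Z$. This explains why $\mathcal Z$ is a collection rather than a single class.

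For the remaining families, the commutative associative part $(A,\cdot)$ is already one of $\mathrm A_{1,1},\dots,\mathrm A_{1,11}$ in Theorem~\ref{thm:class}. The classification in \cite{Zhang-Z} is organized over the same associative algebras, so $\mathrm{TP}ij$ should be compared only with the transposed Poisson classes sharing that associative part. Within that list, use cheap invariants to locate the candidate class: $\dim[A,A]$, whether $[A,A]$ lies in $\operatorname{Ann}(A,\cdot)$ or in $A\cdot A$, the isomorphism type of the Lie algebra (abelian, Heisenberg, $\mathfrak r_2\oplus\mathbb C$, $\mathfrak r_{3,\lambda}$, \dots), and the eigenvalue ratios of $\operatorname{ad}$ on $[A,A]$. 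Then exhibit an explicit isomorphism. It must be a matrix $\varphi\in\operatorname{Aut}(A,\cdot)$, already computed during the classification (as for $\mathrm A_{1,7}$ in the proof above), that carries the bracket of $\mathrm{TP}ij$ to the normal-form bracket of the target class. If needed, compose it with the isomorphism between the two presentations of $(A,\cdot)$. This yields a parameter substitution such as $\lambda=f(\alpha,\beta)$, which is recorded in Table~\ref{tab:correspondence}. It is checked, as in Remark~\ref{rem:second}, by the linear matrix identity $\varphi[x,y]=[\varphi x,\varphi y]'$ on basis elements.

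The main obstacle is bookkeeping when parameters degenerate. A single family $\mathrm{NP}ij$ may split according to parameter values: for instance, in $\mathrm{NP01}$ the class depends on whether $\alpha=\beta$ (equivalently $\alpha+\beta=0$ under the symmetry $(\alpha,\beta)\mapsto(-\beta,-\alpha)$), and in $\mathrm{NP05}$ the bracket $(\alpha-\beta)$ either vanishes or can be rescaled to $1$. Different pieces then land in different transposed Poisson classes, and the parameters of the target class must be recovered correctly, modulo the identifications in \cite{Zhang-Z}. To handle this, in each family first compute the bracket symbolically. Next, split off the loci where a structure constant of the bracket vanishes. On the generic locus, normalize by a diagonal automorphism of $(A,\cdot)$ before matching with \cite{Zhang-Z}. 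Finally, for each resulting row of the table, verify that the proposed $\varphi$ is invertible and preserves both $\cdot$ and the brackets.
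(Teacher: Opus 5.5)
Your plan (compute $[\,,]_\circ$ on each normal form, split off degenerate loci, locate the target class of \cite{Zhang-Z}, and certify each row by an explicit invertible map satisfying \eqref{eq:TP-isomorphism-test}) is the same as the paper's. The trouble is that the concrete content you commit to contradicts the table, and since the theorem \emph{is} the table, these errors are not cosmetic. First, $\mathrm{NP}26$ and $\mathrm{NP}27$ are not commutative. Their products $e_1\circ e_3=\beta e_3$ and $e_3\circ e_1=\alpha e_3$ give $[e_1,e_3]=(\beta-\alpha)e_3$. So only the locus $\alpha=\beta$ lies in $\mathcal Z$, and for $\alpha\neq\beta$ these families go to $\mathbf P12^{-1/(\beta-\alpha)}$. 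Your list ``$\mathrm{NP}25$--$\mathrm{NP}29$'' should be $\mathrm{NP}25,\mathrm{NP}28,\mathrm{NP}29$. Second, your $\mathrm{NP}01$ bracket drops terms: $[e_1,e_3]=e_1-\alpha e_2-e_3$. Hence $[A,A]=\langle e_2,e_1-e_3\rangle$ is $2$-dimensional and abelian, and $\operatorname{ad}e_1$ acts on it with eigenvalues $-2,-1$ for every $(\alpha,\beta)$. The parameter $\beta$ never enters, since $e_3\circ e_3$ cancels. So $\mathrm{TP}01$ is uniformly $\mathbf P16$, with no split at $\alpha=\beta$ or $\alpha+\beta=0$.

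Third, your normalization step fails. In $\mathrm{NP}05$ (and $\mathrm{NP}04$) the constant in $[e_1,e_3]=(\alpha-\beta)e_2$ cannot be rescaled to $1$ by an automorphism of $\mathrm A_{1,1}$. Every automorphism has $\varphi(e_2)=\lambda e_2$ and takes one of two forms: either $\varphi(e_1)\in pe_1+\mathbb{C}e_2$, $\varphi(e_3)\in se_3+\mathbb{C}e_2$ with $\lambda=ps$, or $\varphi(e_1)\in re_3+\mathbb{C}e_2$, $\varphi(e_3)\in qe_1+\mathbb{C}e_2$ with $\lambda=rq$. Comparing $\varphi([e_1,e_3])$ with $[\varphi e_1,\varphi e_3]$ then shows that $\alpha-\beta$ is an invariant up to sign. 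That is why the table records the one-parameter class $\mathbf P04^{1/(\alpha-\beta)}$. The families $\mathbf P04,\mathbf P07,\mathbf P12,\mathbf P15$ of \cite{Zhang-Z} fix the bracket and carry the parameter in the commutative product. The matching map therefore rescales into the target's parametrized product, as in $\phi=(f_1,af_3,f_2)$ for $\mathrm{TP}05_{1/a,0}\cong\mathbf P04^a$; it is not a normalizing automorphism of the source. This also breaks your filtering ``by associative part'' if applied per class label, because the associative type jumps along a family. For example, $\mathbf P15^b$ lives over $\mathrm A_{1,5}$ for $b\neq0$ but over $\mathrm A_{1,11}$ at $b=0$, which is where $\mathrm{TP}59$ lands. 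Similarly, $\mathbf P12^0$ lives over $\mathrm A_{1,10}$, and $\mathbf P04^0,\mathbf P07^0,\mathbf P08^{(\alpha,0)}$ have zero product. Finally, the square-root formulas in the $\mathbf P08$ and $\mathbf P09$ rows are well defined only once you prove $\mathbf P09^u\cong\mathbf P09^{u^{-1}}$ and $\mathbf P08^{(u,v)}\cong\mathbf P08^{(u^{-1},v/u)}$. The paper does this with explicit maps, and ``modulo the identifications in \cite{Zhang-Z}'' does not substitute for it.
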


\begingroup
\footnotesize
\renewcommand{\arraystretch}{1.06}
\setlength{\tabcolsep}{4pt}
\setlength{\LTleft}{0pt}
\setlength{\LTright}{0pt}

\begin{longtable}{@{}>{\centering\arraybackslash}p{.38\textwidth}
                      p{.58\textwidth}@{}}
\caption{Isomorphism correspondence table.}\label{tab:correspondence}\\
\toprule
{\fontsize{6}{7}\selectfont\bfseries
 Induced Transposed Poisson Algebra}
&
{\fontsize{6}{7}\selectfont\bfseries
 Correspondence with the Existing Classification}\\
\midrule
\endfirsthead

\multicolumn{2}{c}{\tablename~\thetable\ (continued)}\\
\toprule
{\fontsize{6}{7}\selectfont\bfseries
 Induced Transposed Poisson Algebra}
&
{\fontsize{6}{7}\selectfont\bfseries
 Correspondence with the Existing Classification}\\
\midrule
\endhead

\multicolumn{2}{r}{Continued on the next page}\\
\endfoot

\bottomrule
\endlastfoot

\(\mathrm{TP}01_{\alpha,\beta}\)
& \(\mathbf{P16}\)\\
\midrule

\(\mathrm{TP}02_{\alpha},\ \mathrm{TP}03_{\alpha}\)
& \(\mathbf{P18}\)\\
\midrule

\(\mathrm{TP}04_{\alpha,\beta,\gamma},\
  \mathrm{TP}05_{\alpha,\beta}\)
&
\(\begin{cases}
\mathbf{P04}^{1/(\alpha-\beta)}, & \alpha\neq\beta,\\
\mathcal Z, & \alpha=\beta.
\end{cases}\)\\
\midrule

\(\mathrm{TP}06_{\alpha,\beta}^{\,(\alpha\neq2)}\)
&
\(\begin{cases}
\mathbf{P03}^{-1/\beta}, & \alpha=1,\ \beta\neq0,\\
\mathcal Z, & \alpha=1,\ \beta=0,\\
\mathbf{P17}, & \alpha\neq1,\ 2.
\end{cases}\)\\
\midrule

\(\mathrm{TP}07_{\alpha,\beta},\ \mathrm{TP}08_{\alpha}\)
& \(\mathbf{P17}\)\\
\midrule

\(\mathrm{TP}09_{\alpha,\beta}\)
&
\(\begin{cases}
\mathbf{P03}^{-1/(\beta-\alpha)}, & \alpha\neq\beta,\\
\mathcal Z, & \alpha=\beta.
\end{cases}\)\\
\midrule

\(\mathrm{TP}10_{\alpha}\)
& \(\mathcal Z\)\\
\midrule

\(\mathrm{TP}11_{\alpha,\beta,\gamma},\
  \mathrm{TP}14_{\alpha,\beta,\gamma}\)
&
Let \(\Delta=\gamma^{2}+4\beta\).\newline
\(\begin{cases}
\mathbf{P02},
  & \gamma=2\alpha,\ \beta=-\alpha^{2},\\
\mathbf{P08}^{(0,\,1/(2\alpha-\gamma))},
  & \beta=\alpha(\alpha-\gamma),\ \gamma\neq2\alpha,\\
\mathbf{P08}^{(1,\,2/(2\alpha-\gamma))},
  & \Delta=0,\ \gamma\neq2\alpha,\\
\mathbf{P08}^{(u,v)},
  & \beta\neq\alpha(\alpha-\gamma),\ \Delta\neq0,
\end{cases}\)\newline
where
\(\displaystyle
\begin{aligned}
u&=\frac{2\alpha-\gamma-\sqrt{\Delta}}
        {2\alpha-\gamma+\sqrt{\Delta}},
&
v&=\frac{2}{2\alpha-\gamma+\sqrt{\Delta}}.
\end{aligned}\)\\
\midrule

\(\mathrm{TP}12_{\alpha,\beta,\gamma}\)
&
{\scriptsize
\(\begin{cases}
\mathbf{P02},
  & \alpha=\beta=\gamma,\\
\mathbf{P08}^{(0,\,1/(\alpha-\gamma))},
  & \alpha=\beta,\ \alpha\neq\gamma,\\
\mathbf{P08}^{(0,\,1/(\alpha-\beta))},
  & \alpha=\gamma,\ \alpha\neq\beta,\\
\mathbf{P08}^{(1,\,1/(\alpha-\beta))},
  & \beta=\gamma,\ \alpha\neq\beta,\\
\mathbf{P08}^{((\alpha-\gamma)/(\alpha-\beta),\,1/(\alpha-\beta))},
  & (\alpha-\beta)(\alpha-\gamma)\neq0,\ \beta\neq\gamma.
\end{cases}\)}\\
\midrule

\(\mathrm{TP}13_{\alpha,\beta},\
  \mathrm{TP}15_{\alpha,\beta}\)
&
\(\begin{cases}
\mathbf{P07}^{1/(\alpha-\beta)}, & \alpha\neq\beta,\\
\mathcal Z, & \alpha=\beta.
\end{cases}\)\\
\midrule

\(\mathrm{TP}16_{\alpha,\beta}\text{--}
  \mathrm{TP}19_{\alpha,\beta}\)
&
\(\begin{cases}
\mathbf{P14}^{1/(\alpha-\beta)}, & \alpha\neq\beta,\\
\mathcal Z, & \alpha=\beta.
\end{cases}\)\\
\midrule

\(\mathrm{TP}20_{\alpha,\beta,\gamma}\)
&
\(\begin{cases}
\mathbf{P15}^{2/(\beta-\alpha)}, & \alpha\neq\beta,\\
\mathbf{P01}, & \alpha=\beta,\ \gamma\neq1,\\
\mathcal Z, & \alpha=\beta,\ \gamma=1.
\end{cases}\)\\
\midrule

\(\mathrm{TP}21_{\alpha,\beta}\)
&
\(\begin{cases}
\mathbf{P15}^{2/(\beta-\alpha)}, & \alpha\neq\beta,\\
\mathcal Z, & \alpha=\beta.
\end{cases}\)\\
\midrule

\(\mathrm{TP}22_{\alpha,\beta}^{\,(\beta\neq0)},\
  \mathrm{TP}24_{\alpha}\)
& \(\mathbf{P01}\)\\
\midrule

\(\mathrm{TP}23_{\alpha,\beta}\)
&
\(\begin{cases}
\mathbf{P15}^{2/(\alpha-\beta)}, & \alpha\neq\beta,\\
\mathcal Z, & \alpha=\beta.
\end{cases}\)\\
\midrule

\(\mathrm{TP}25_{\alpha,\beta,\gamma}\)
& \(\mathcal Z\)\\
\midrule

\(\mathrm{TP}26_{\alpha,\beta,\gamma},\
  \mathrm{TP}27_{\alpha,\beta,\gamma}\)
&
\(\begin{cases}
\mathbf{P12}^{-1/(\beta-\alpha)}, & \alpha\neq\beta,\\
\mathcal Z, & \alpha=\beta.
\end{cases}\)\\
\midrule

\(\mathrm{TP}28_{\alpha,\beta},\
  \mathrm{TP}29_{\alpha,\beta}\)
& \(\mathcal Z\)\\
\midrule

\(\mathrm{TP}30_{\alpha,\beta}\)
&
\(\begin{cases}
\mathbf{P13}, & \beta\neq1,\\
\mathcal Z, & \beta=1.
\end{cases}\)\\
\midrule

\(\mathrm{TP}31_{\alpha,\beta}\)
& \(\mathbf{P13}\)\\
\midrule

\(\mathrm{TP}32_{\alpha,\beta}\)
& \(\mathcal Z\)\\
\midrule

\(\mathrm{TP}33_{\alpha,\beta}\)
&
\(\begin{cases}
\mathbf{P12}^{0}, & \beta\neq1,\\
\mathcal Z, & \beta=1.
\end{cases}\)\\
\midrule

\(\mathrm{TP}34_{\alpha},\ \mathrm{TP}35_{\alpha}\)
& \(\mathbf{P12}^{0}\)\\
\midrule

\(\mathrm{TP}36_{\alpha}\text{--}\mathrm{TP}39_{\alpha}\)
& \(\mathcal Z\)\\
\midrule

\(\mathrm{TP}40_{\alpha,\beta}^{\,(\alpha\neq\beta)}\)
&
\(\begin{cases}
\mathbf{P10}^{(\alpha-1)/(\beta-1)}, & \beta\neq1,\\
\mathbf{P11}, & \beta=1.
\end{cases}\)\\
\midrule

\(\mathrm{TP}41_{\alpha}\)
&
\(\begin{cases}
\mathbf{P05}, & \alpha=1,\\
\mathbf{P10}^{1}, & \alpha\neq1.
\end{cases}\)\\
\midrule

\(\mathrm{TP}42_{\alpha}\)
&
\(\begin{cases}
\mathcal Z, & \alpha=1,\\
\mathbf{P06}, & \alpha\neq1.
\end{cases}\)\\
\midrule

\(\mathrm{TP}43_{\alpha,\beta}^{\,(\alpha\neq0)}\)
&
\(\begin{cases}
\mathbf{P06}, & \beta=0,\\
\mathbf{P11}, & \beta=1,\\
\mathbf{P10}^{-1/(\beta-1)}, & \beta\neq0,\ 1.
\end{cases}\)\\
\midrule

\(\mathrm{TP}44_{\alpha,\beta}^{\,(\beta\neq0)}\)
&
Let \(\Delta=\alpha^{2}+4\beta\).\newline
\(\begin{cases}
\mathbf{P03}^{0}, & \alpha=2,\ \beta=-1,\\
\mathbf{P09}^{0}, & \beta=1-\alpha,\ \alpha\neq2,\\
\mathbf{P09}^{1}, & \Delta=0,\ \alpha\neq2,\\
\mathbf{P09}^{u}, & \beta\neq1-\alpha,\ \Delta\neq0,
\end{cases}\)\newline
where
\(\displaystyle
u=\frac{\alpha-2-\sqrt{\Delta}}
        {\alpha-2+\sqrt{\Delta}}.\)\\
\midrule

\(\mathrm{TP}45_{\alpha,\beta}\)
&
\(\begin{cases}
\mathbf{P09}^{1}, & \beta=0,\\
\mathbf{P09}^{0}, & \beta=1,\\
\mathbf{P09}^{-1/(\beta-1)}, & \beta\neq0,\ 1.
\end{cases}\)\\
\midrule

\(\mathrm{TP}46_{\alpha}^{\,(\alpha\neq0)}\)
& \(\mathbf{P10}^{1-\alpha}\)\\
\midrule

\(\mathrm{TP}47_{\alpha}\)
&
\(\begin{cases}
\mathbf{P06}, & \alpha=0,\\
\mathbf{P10}^{1}, & \alpha\neq0.
\end{cases}\)\\
\midrule

\(\mathrm{TP}48_{\alpha}\)
& \(\mathbf{P09}^{0}\)\\
\midrule

\(\mathrm{TP}49_{\alpha}\)
&
\(\begin{cases}
\mathbf{P06}, & \alpha=1,\\
\mathbf{P11}, & \alpha=0,\\
\mathbf{P10}^{1/\alpha}, & \alpha\neq0,\ 1.
\end{cases}\)\\
\midrule

\(\mathrm{TP}50\)
& \(\mathbf{P10}^{1}\)\\
\midrule

\(\mathrm{TP}51_{\alpha}\)
& \(\mathbf{P10}^{0}\)\\
\midrule

\(\mathrm{TP}52_{\alpha}\)
&
Let
\(\displaystyle
\lambda_{1}=\frac{\alpha+\sqrt{\alpha^{2}+4}}{2}\)
and
\(\displaystyle
\lambda_{2}=\frac{\alpha-\sqrt{\alpha^{2}+4}}{2}\).\newline
\(\begin{cases}
\mathbf{P09}^{1}, & \alpha^{2}+4=0,\\
\mathbf{P09}^{\lambda_{2}/\lambda_{1}},
  & \alpha^{2}+4\neq0.
\end{cases}\)\\
\midrule

\(\mathrm{TP}53,\ \mathrm{TP}54\)
& \(\mathbf{P05}\)\\
\midrule

\(\mathrm{TP}55_{\alpha}\)
& \(\mathbf{P03}^{0}\)\\
\midrule

\(\mathrm{TP}56\)
& \(\mathbf{P11}\)\\
\midrule

\(\mathrm{TP}57,\ \mathrm{TP}58_{\alpha}\)
& \(\mathcal Z\)\\
\midrule

\(\mathrm{TP}59_{\alpha},\ \mathrm{TP}60_{\alpha}\)
& \(\mathbf{P15}^{0}\)\\
\midrule

\(\mathrm{TP}61_{\alpha}^{\,(\alpha\neq1,2)},\
  \mathrm{TP}63_{\alpha},\ \mathrm{TP}64\)
& \(\mathbf{P10}^{2}\)\\
\midrule

\(\mathrm{TP}62_{\alpha}\)
&
\(\begin{cases}
\mathbf{P05}, & \alpha\neq0,\\
\mathcal Z, & \alpha=0.
\end{cases}\)\\
\midrule

\(\mathrm{TP}65_{\alpha,\beta}\)
&
\(\begin{cases}
\mathbf{P05}, & \beta\neq0,\\
\mathcal Z, & \beta=0.
\end{cases}\)\\
\midrule

\(\mathrm{TP}66_{\alpha}\)
&
\(\begin{cases}
\mathbf{P11}, & \alpha\neq1,\\
\mathcal Z, & \alpha=1.
\end{cases}\)\\
\midrule

\(\mathrm{TP}67_{\alpha}\)
& \(\mathbf{P11}\)\\
\midrule

\(\mathrm{TP}68_{\alpha}\)
& \(\mathcal Z\)\\
\midrule

\(\mathrm{TP}69_{\alpha}\)
& \(\mathbf{P10}^{2}\)\\
\midrule

\(\mathrm{TP}70_{\alpha},\
  \mathrm{TP}71_{\alpha}^{\,(\alpha\neq-1)}\)
&
\(\begin{cases}
\mathbf{P05}, & \alpha\neq1,\\
\mathcal Z, & \alpha=1.
\end{cases}\)\\
\midrule

\(\mathrm{TP}72_{\alpha}\)
& \(\mathbf{P05}\)\\
\midrule

\(\mathrm{TP}73_{\alpha}^{\,(\alpha\neq0)}\)
&
\(\begin{cases}
\mathbf{P11}, & \alpha\neq1,\\
\mathcal Z, & \alpha=1.
\end{cases}\)\\
\midrule

\(\mathrm{TP}74_{\alpha}\)
& \(\mathbf{P11}\)\\
\end{longtable}

\endgroup

\begin{proof}
By Theorem~\ref{thm:commutator-construction}, every algebra in Theorem~\ref{thm:classification-np3} induces a transposed Poisson algebra through \eqref{eq:commutator}. Proposition~\ref{prop:isomorphism-preserved} shows that it is enough to consider the representatives in that theorem.

Let \(\{e_1,e_2,e_3\}\) and \(\{f_1,f_2,f_3\}\) be the source and target bases, respectively. We write \(\phi=(v_1,v_2,v_3)\) when \(\phi(e_i)=v_i\) for \(i=1,2,3\). By \cite{Zhang-Z}, an invertible linear map
\(\phi\colon S\to T\) is an isomorphism if and only if
\begin{equation}\label{eq:TP-isomorphism-test}
\begin{aligned}
\phi(x\cdot_S y)&=\phi(x)\cdot_T\phi(y),\\
\phi([x,y]_{\circ,S})&=[\phi(x),\phi(y)]_T
\end{aligned}
\end{equation}
for all basis elements \(x,y\) of \(S\). We compute the induced brackets and compare the resulting algebras with the normal forms in
\cite{Zhang-Z}, using the isomorphism-reduction and isomorphism-testing methods in
\cite{Zhang-Z}.

We first record the sign-sensitive parameters in the \(\mathbf{P15}\) cases. For \(\mathrm{TP}21_{\alpha,\beta}\), we have
\[
[e_1,e_3]_{\circ}=\frac{\beta-\alpha}{2}e_1,
\qquad
[e_2,e_3]_{\circ}=(\beta-\alpha)e_2.
\]
Put \(c=\beta-\alpha\). If \(c\neq0\), then
\[
\phi=\left(f_1-f_2,\ f_2,\ \frac c2f_3\right)
\]
gives
\[
\mathrm{TP}21_{\alpha,\beta}
\cong\mathbf{P15}^{2/c}.
\]
Thus, the corresponding parameter is \(2/(\beta-\alpha)\). If \(c=0\), the induced bracket is zero.

For \(\mathrm{TP}23_{\alpha,\beta}\), we have
\[
[e_1,e_3]_{\circ}=\frac{\alpha-\beta}{2}e_1,
\qquad
[e_2,e_3]_{\circ}=(\alpha-\beta)e_2.
\]
Put \(d=\alpha-\beta\). If \(d\neq0\), then
\[
\phi=\left(f_1-f_2,\ f_2,\ \frac d2f_3\right)
\]
gives
\[
\mathrm{TP}23_{\alpha,\beta}
\cong\mathbf{P15}^{2/d}.
\]
Thus, the corresponding parameter is \(2/(\alpha-\beta)\). If \(d=0\), the induced bracket is zero.

For \(u\neq0\), direct substitution into
\eqref{eq:TP-isomorphism-test} gives
\[
\begin{alignedat}{2}
\mathbf{P09}^{u}
&\cong\mathbf{P09}^{u^{-1}},
&\qquad
\phi&=(u^2f_1,u^2(u-1)f_1+u^3f_2,uf_3),\\[2pt]
\mathbf{P08}^{(u,v)}
&\cong\mathbf{P08}^{(u^{-1},v/u)},
&
\phi&=(f_1,(u-1)f_1+uf_2,uf_3).
\end{alignedat}
\]
Consequently, the two choices of each square root in the statement determine isomorphic classes.

Computing the induced bracket for each source family and solving \eqref{eq:TP-isomorphism-test} gives Table~\ref{tab:correspondence}. For reasons of space, and because many of the induced transposed Poisson algebras in the table are mutually isomorphic, we do not list a change of basis for every entry. Instead, we give representative explicit isomorphisms for the target classes and exceptional parameter values occurring in the table. Any parameter omitted from a source label is arbitrary.

The classes \(\mathbf{P01}\)--\(\mathbf{P05}\) are represented by
\[
\begin{alignedat}{2}
\mathrm{TP}24
&\cong\mathbf{P01},
&\qquad \phi&=(-f_1,f_3,f_2),\\[2pt]
\mathrm{TP}12_{0,0,0}
&\cong\mathbf{P02},
&\phi&=(-f_3,f_1,f_2),\\[2pt]
\mathrm{TP}09_{0,-1/a}
&\cong\mathbf{P03}^{a}\quad(a\neq0),
&\phi&=(f_2,f_1,af_3),\\[2pt]
\mathrm{TP}55
&\cong\mathbf{P03}^{0},
&\phi&=(f_2,f_1,-f_3),\\[2pt]
\mathrm{TP}05_{1/a,0}
&\cong\mathbf{P04}^{a}\quad(a\neq0),
&\phi&=(f_1,af_3,f_2),\\[2pt]
\mathrm{TP}54
&\cong\mathbf{P05},
&\phi&=(f_1,f_3,f_2).
\end{alignedat}
\]

The classes \(\mathbf{P06}\)--\(\mathbf{P09}\) are represented by
\[
\begin{alignedat}{2}
\mathrm{TP}42_0
&\cong\mathbf{P06},
&\qquad \phi&=(f_3,f_1,f_2),\\[2pt]
\mathrm{TP}13_{1/a,0}
&\cong\mathbf{P07}^{a}\quad(a\neq0),
&\phi&=(f_1,f_2,a^{-1}f_3),\\[2pt]
\mathrm{TP}11_{1/b,0,(1-a)/b}
&\cong\mathbf{P08}^{(a,b)}\quad(b\neq0),
&\phi&=(f_1,-b^{-1}f_2,b^{-1}f_3),\\[2pt]
\mathrm{TP}45_{0,1-1/a}
&\cong\mathbf{P09}^{a}\quad(a\neq0),
&\phi&=(a^{-1}f_3,a^{-2}f_1,-a^{-3}f_2),\\[2pt]
\mathrm{TP}48
&\cong\mathbf{P09}^{0},
&\phi&=(-f_3,f_1,f_1+f_2).
\end{alignedat}
\]
In the \(\mathbf{P08}^{(a,b)}\) row, \(a\) is arbitrary.

The classes \(\mathbf{P10}\)--\(\mathbf{P12}\) are represented by
\[
\begin{alignedat}{2}
\mathrm{TP}40_{1-a,0}
&\cong\mathbf{P10}^{a}\quad(a\neq1),
&\qquad \phi&=(f_3,f_2,(1-a)f_1+f_2),\\[2pt]
\mathrm{TP}41_0
&\cong\mathbf{P10}^{1},
&\phi&=(f_3,f_2,-f_1),\\[2pt]
\mathrm{TP}67
&\cong\mathbf{P11},
&\phi&=(f_3,f_1+f_2,f_2),\\[2pt]
\mathrm{TP}26_{0,-1/b,0}
&\cong\mathbf{P12}^{b}\quad(b\neq0),
&\phi&=(b^{-1}f_3,f_2,f_1+f_2),\\[2pt]
\mathrm{TP}35
&\cong\mathbf{P12}^{0},
&\phi&=(f_2,f_3,-f_1-f_2).
\end{alignedat}
\]

The classes \(\mathbf{P13}\)--\(\mathbf{P15}\) are represented by
\[
\begin{alignedat}{2}
\mathrm{TP}31
&\cong\mathbf{P13},
&\qquad \phi&=(-f_2,f_2+f_3,f_1+f_2),\\[2pt]
\mathrm{TP}17_{1/b,0}
&\cong\mathbf{P14}^{b}\quad(b\neq0),
&\phi&=(f_1+2f_2,f_1+f_2,b^{-1}f_3),\\[2pt]
\mathrm{TP}20_{0,2/b,1-1/b}
&\cong\mathbf{P15}^{b}\quad(b\neq0),
&\phi&=(f_1,f_2,b^{-1}f_3),\\[2pt]
\mathrm{TP}59
&\cong\mathbf{P15}^{0},
&\phi&=(f_1,f_2,f_3).
\end{alignedat}
\]

Finally, let \(i^2=-1\). The classes
\(\mathbf{P16}\)--\(\mathbf{P18}\) are represented by
\[
\begin{alignedat}{2}
\mathrm{TP}01_{i,0}
&\cong\mathbf{P16},
&\qquad \phi&=(-if_1+f_3,2f_2,if_1+f_3),\\[2pt]
\mathrm{TP}08_1
&\cong\mathbf{P17},
&\phi&=(f_3,f_1,f_2),\\[2pt]
\mathrm{TP}02_1
&\cong\mathbf{P18},
&\phi&=(f_3,f_2,f_1).
\end{alignedat}
\]

Under the stated parameter restrictions, every displayed triple is a basis of the target algebra, and direct substitution verifies \eqref{eq:TP-isomorphism-test}. For the remaining source families, direct computation of the induced brackets and the same isomorphism-testing equations give the correspondences recorded in Table~\ref{tab:correspondence}. The cases belonging to \(\mathcal Z\) follow directly from \eqref{eq:commutator}, since their induced brackets are zero.
\end{proof}

By Theorem~\ref{thm:tp-correspondence}, the only 3-dimensional transposed Poisson algebras that cannot be obtained from the classified Novikov--Poisson algebras with nonzero commutative associative product are
\[
\mathbf{P04}^{0},\qquad
\mathbf{P07}^{0},\qquad
\mathbf{P08}^{(\alpha,0)},\qquad
\mathbf{P19}.
\]
\begin{thm}\label{thm:three-dimensional-realization}
Every 3-dimensional complex transposed Poisson algebra except \(\mathbf{P19}\) admits a Novikov--Poisson realization. The class \(\mathbf{P19}\) has zero commutative product and underlying Lie algebra \(\mathfrak{sl}_2(\mathbb C)\).
\end{thm}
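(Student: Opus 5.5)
The proof splits into realizing every class other than $\mathbf{P19}$ and showing that $\mathbf{P19}$ has no realization. For the first part I would rely on Theorem~\ref{thm:tp-correspondence}, which already realizes every class of \cite{Zhang-Z} except $\mathbf{P04}^{0}$, $\mathbf{P07}^{0}$, $\mathbf{P08}^{(\alpha,0)}$ and $\mathbf{P19}$ by algebras $\mathrm{TP}ij$ coming from Theorem~\ref{thm:classification-np3}. The plan is to read off the structure constants of these four exceptional classes from \cite{Zhang-Z}. I expect each to have zero commutative product, which is consistent with the fact that their parameters are the limiting values missed by the formulas $1/(\alpha-\beta)$ in Table~\ref{tab:correspondence}. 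If that is so, each is simply a $3$-dimensional Lie algebra $(A,[\,,])$ with $x\cdot y=0$. Any class with both products zero is realized by the zero Novikov--Poisson algebra, or belongs to $\mathcal Z$.

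When $\cdot=0$, the compatibility identities \eqref{eq:NP3} and \eqref{eq:NP4} hold trivially, since both sides vanish. A triple $(A,0,\circ)$ is therefore a Novikov--Poisson algebra exactly when $(A,\circ)$ is a Novikov algebra. So realizing a class with zero commutative product reduces to finding a Novikov product $\circ$ on $A$ with $x\circ y-y\circ x=[x,y]$. I would do this explicitly for each solvable $3$-dimensional Lie algebra that occurs.

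For the Heisenberg algebra $[e_1,e_2]=e_3$, I would take $e_1\circ e_2=e_3$ with all other products zero. This is associative with all triple products zero, hence Novikov. For $\mathfrak r_2\oplus\mathbb C$ and the families $[e_3,e_1]=e_1$, $[e_3,e_2]=\lambda e_2$ (including the Jordan-block variant), I would use products of the form $e_1\circ e_3=-e_1$, $e_2\circ e_3=-\lambda e_2$ (resp.\ $-e_2-e_1$), with all other products zero. These are right-multiplication-by-$e_3$ structures, and I expect the Novikov identities to reduce to a short check: the only nonzero triple products are $(e_i\circ e_3)\circ e_3$, and left symmetry compares terms that are all zero or equal. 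Each of these checks is a finite computation on basis triples, done in the matrix form \eqref{eq:NP-polynomial-system} with $\cdot=0$. Under $\phi$ the realization is then transported into the normal form of \cite{Zhang-Z} via \eqref{eq:TP-isomorphism-test}.

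The main obstacle is proving that $\mathbf{P19}$, which is $\mathfrak{sl}_2(\mathbb C)$ with zero product, is not realized. Its commutative product is zero, so a realization would be a Novikov product on $\mathfrak{sl}_2(\mathbb C)$ whose commutator is the $\mathfrak{sl}_2$ bracket. Such a product would in particular be left-symmetric by \eqref{eq:novikov-left-symmetric}. I would invoke the classical fact that a finite-dimensional semisimple Lie algebra in characteristic $0$ admits no left-symmetric structure compatible with its bracket. The argument: the left multiplications $L_x$ would give a representation $\mathfrak g\to\mathfrak{gl}(\mathfrak g)$ for which the identity map is a $1$-cocycle. Since $H^1(\mathfrak g,V)=0$ by Whitehead's lemma, the identity map would be a coboundary, $x=L_x v$ for some $v\in\mathfrak g$. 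Evaluating at $x=v\circ$-related vectors and using the trace form then gives a contradiction; the cleanest route is $\operatorname{tr}\operatorname{ad}$ versus $\operatorname{tr}R$. Alternatively, I would cite Zelmanov's theorem that the commutator Lie algebra of a finite-dimensional Novikov algebra is solvable.

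If a self-contained argument is preferred, I would solve \eqref{eq:NP-polynomial-system} with $\cdot=0$ together with the bracket constraint directly on the standard basis $h,e,f$ and show that the system is inconsistent. This is an elimination computation of the same kind as in the proof of Theorem~\ref{thm:classification-np3}.
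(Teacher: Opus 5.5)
Your strategy is the paper's: reduce via Theorem~\ref{thm:tp-correspondence} to $\mathbf{P04}^{0}$, $\mathbf{P07}^{0}$, $\mathbf{P08}^{(\alpha,0)}$, $\mathbf{P19}$, realize the first three with zero commutative product, and exclude $\mathfrak{sl}_2(\mathbb C)$. Your Heisenberg product $e_1\circ e_2=e_3$ is exactly the one used there.

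\textbf{The gap: your products for $\mathbf{P07}^{0}$ and $\mathbf{P08}^{(\alpha,0)}$ are not Novikov.} These two classes are your $\mathbb C e_3\ltimes\mathbb C^2$ families (brackets $[e_1,e_3]=e_1,\ [e_2,e_3]=e_2$ and $[e_1,e_3]=e_1+e_2,\ [e_2,e_3]=\alpha e_2$). In every member of your family, $e_1\circ e_3=-e_1$ and $L_{e_3}=0$. Take $(x,y,z)=(e_1,e_3,e_3)$ in \eqref{eq:novikov-left-symmetric}:
\begin{itemize}
\item the left side is $(e_1\circ e_3)\circ e_3-e_1\circ(e_3\circ e_3)=e_1$;
\item the right side is $(e_3\circ e_1)\circ e_3-e_3\circ(e_1\circ e_3)=0$.
\end{itemize}
Your claim that left symmetry only compares terms that are zero or equal misses exactly this pairing. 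The nonzero triple product $(e_i\circ e_3)\circ e_3$ is matched against terms with $e_3$ on the left, and those all vanish. This is not a sign slip: with $e_1\circ e_3=ce_1$ the discrepancy is $c^2e_1\neq0$ for every $c\neq0$. What you wrote is the opposite algebra of a Novikov algebra. It satisfies the mirror-image identities, not \eqref{eq:novikov-left-symmetric}--\eqref{eq:novikov-right-commutative}.

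\textbf{The repair} is to put the derivation on the left, as the paper does:
\begin{itemize}
\item for $\mathbf{P07}^{0}$, set $e_3\circ e_1=-e_1$ and $e_3\circ e_2=-e_2$;
\item for $\mathbf{P08}^{(\alpha,0)}$, set $e_3\circ e_1=-e_1-e_2$ and $e_3\circ e_2=-\alpha e_2$.
\end{itemize}
In general, set $e_3\circ v=[e_3,v]$ on the abelian ideal and all other products zero. Every product then lies in $\operatorname{span}(e_1,e_2)$, which is annihilated from the left, so all $(x\circ y)\circ z$ vanish. Hence \eqref{eq:novikov-right-commutative} is trivial, and \eqref{eq:novikov-left-symmetric} reduces to $[L_x,L_y]=0$. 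That holds because $L_{e_3}$ is the only nonzero left multiplication.

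\textbf{Your exclusion of $\mathbf{P19}$ is correct and takes a different route.} The paper cites solvability of the commutator algebra of a finite-dimensional Novikov algebra \cite{Burde}. Your argument uses only left symmetry and Whitehead's lemma, so it is self-contained and also rules out left-symmetric structures on $\mathfrak{sl}_2(\mathbb C)$. Do state its end precisely:
\begin{itemize}
\item $H^1=0$ gives $v$ with $R_v=\mathrm{id}$, so $\operatorname{tr}R_v=3$;
\item but $\operatorname{tr}R_x=\operatorname{tr}L_x-\operatorname{tr}\operatorname{ad}_x=0$ for every $x$, since $L$ and $\operatorname{ad}$ are representations of the perfect Lie algebra $\mathfrak{sl}_2(\mathbb C)$.
\end{itemize}
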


\begin{proof}
If the Lie bracket is zero, define \(x\circ y=0\) for all \(x,y\). We may therefore assume that the bracket is nonzero. By the preceding discussion, only
$
\mathbf{P04}^{0},\quad
\mathbf{P07}^{0},\quad
\mathbf{P08}^{(\alpha,0)}$ and $
\mathbf{P19}
$
remain to be considered.

For the first 3 classes, retain the zero commutative product and define, with all omitted products equal to zero,
\[
\begin{array}{c@{\qquad}l}
\mathbf{P04}^{0}
& e_1\circ e_2=e_3,\\[2pt]
\mathbf{P07}^{0}
& e_3\circ e_1=-e_1,\quad e_3\circ e_2=-e_2,\\[2pt]
\mathbf{P08}^{(\alpha,0)}
& e_3\circ e_1=-e_1-e_2,\quad
  e_3\circ e_2=-\alpha e_2.
\end{array}
\]
A direct verification shows that these products satisfy the Novikov identities. Since the commutative product is zero, the Novikov--Poisson compatibility identities hold automatically. Their commutator brackets are,
respectively,
\[
[e_1,e_2]=e_3,
\]
\[
[e_1,e_3]=e_1,\qquad [e_2,e_3]=e_2,
\]
and
\[
[e_1,e_3]=e_1+e_2,\qquad [e_2,e_3]=\alpha e_2.
\]
Thus they induce precisely \(\mathbf{P04}^{0}\), \(\mathbf{P07}^{0}\), and \(\mathbf{P08}^{(\alpha,0)}\).

If \(\mathbf{P19}\) admitted a Novikov--Poisson realization, then \(\mathfrak{sl}_2(\mathbb C)\) would be the commutator Lie algebra of a 3-dimensional Novikov algebra. However, the commutator Lie algebra of every finite-dimensional Novikov algebra over \(\mathbb C\) is solvable \cite{Burde}, whereas \(\mathfrak{sl}_2(\mathbb C)\) is not solvable. This contradiction completes the proof.
\end{proof}

\subsection{Sartayev's conjecture on the speciality of transposed Poisson algebras as GD algebras}
\label{sec:gd_conjecture}
This subsection recalls the notions of Gelfand--Dorfman algebras and special Gelfand--Dorfman algebras, states Sartayev's conjecture, and proves that it holds in dimension 3.

\begin{defn}\cite{Gelfand-Dorfman}
A \emph{Gelfand--Dorfman algebra}, abbreviated to \emph{GD algebra}, is a triple $(A,\circ,[\,,])$ such that $(A,\circ)$ is a Novikov algebra, $(A,[\,,])$ is a Lie algebra, and
\begin{equation}
\label{eq:GD_compatibility}
 [x,y\circ z]-[z,y\circ x]+[y,x]\circ z-[y,z]\circ x-y\circ[x,z]=0
\end{equation}
for all $x,y,z\in A$.
\end{defn}

\begin{defn}\cite{Kolesnikov-Sartayev-Orazgaliev}
A GD algebra \(A=(A,\circ,[\cdot,\cdot])\) is said to be special if it can be embedded into a GD algebra induced by a differential Poisson algebra. That is, there exists a differential Poisson algebra \((P,\cdot,\{\cdot,\cdot\},d)\) such that \(A\) is isomorphic to a GD subalgebra of \(P^{(d)}\), where \(P^{(d)}\) is the GD algebra on \(P\) defined by
$
        [u,v]=\{u,v\}$ and $
        u\circ v=u\cdot d(v)
      $ for all $ u,v\in P.
$
\end{defn}

If $(A,\cdot,[\,,])$ is a transposed Poisson algebra, then
\[
 x\circ y=x\cdot y
\]
is a Novikov product and $(A,\circ,[\,,])$ is a Gelfand--Dorfman algebra \cite{Kolesnikov}.  It is therefore natural to ask whether this Gelfand--Dorfman algebra is special.  Sartayev formulated the following conjecture \cite{Sartayev}.

\begin{conjecture}\label{conj:special-GD}
Every transposed Poisson algebra, regarded as a Gelfand--Dorfman algebra by putting $x\circ y=x\cdot y$, is special.
\end{conjecture}

The conjecture remains open in general.  We now prove that it holds in dimension 3.
\begin{thm}\label{thm:conjecture-proof}
Every complex transposed Poisson algebra of dimension 3 is special as a Gelfand--Dorfman algebra.
\end{thm}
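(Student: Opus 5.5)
The plan is to reduce everything to Theorem~\ref{thm:three-dimensional-realization} together with the result of \cite{Kolesnikov}: every transposed Poisson algebra induced from a Novikov--Poisson algebra by \eqref{eq:commutator} is special when regarded as a GD algebra. The argument then splits into the generic case and the single exceptional class $\mathbf{P19}$.

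\emph{Generic case.} Let $(A,\cdot,[\,,])$ be a 3-dimensional complex transposed Poisson algebra not isomorphic to $\mathbf{P19}$. By Theorem~\ref{thm:three-dimensional-realization} there is a Novikov--Poisson algebra $(A,\cdot,\circ)$ with $[x,y]=x\circ y-y\circ x$. This includes the zero-bracket case, via $\circ=0$. By \cite{Kolesnikov}, the GD algebra $(A,\cdot,[\,,])$ obtained by taking $x\circ y=x\cdot y$ as the Novikov product is then special. Speciality is clearly invariant under isomorphism of GD algebras, since it is defined by embeddability up to isomorphism. An isomorphism of transposed Poisson algebras is also an isomorphism of the associated GD algebras. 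Hence every class in the classification of \cite{Zhang-Z} other than $\mathbf{P19}$ is settled.

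\emph{The class $\mathbf{P19}$.} This is the main point needing a separate argument, since no Novikov--Poisson realization exists. Here the commutative product is zero, so the associated GD algebra has zero Novikov product and Lie bracket that of $\mathfrak{sl}_2(\mathbb C)$. I will embed it directly into $P^{(d)}$ for a suitable differential Poisson algebra. Take $P=\mathbb C[x,y]$ with the ordinary product and the symplectic bracket $\{x,y\}=1$, and take $d=0$. The zero map is trivially a derivation of both operations, so $(P,\cdot,\{\,,\},0)$ is a differential Poisson algebra, and in $P^{(0)}$ we have $u\circ v=u\cdot d(v)=0$ for all $u,v$.

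It then suffices to find a Lie subalgebra of $(P,\{\,,\})$ isomorphic to $\mathfrak{sl}_2(\mathbb C)$. The span of $h=xy$, $e=\tfrac12x^2$, $f=-\tfrac12y^2$ does this. A routine check gives brackets of the form $\{h,e\}=\pm2e$, $\{h,f\}=\pm2f$, $\{e,f\}=\pm h$, and after adjusting signs by rescaling these are the standard $\mathfrak{sl}_2$ relations. This span is closed under $\circ$ (which is zero) and under $\{\,,\}$, so it is a GD subalgebra of $P^{(0)}$ isomorphic to $\mathbf{P19}$ regarded as a GD algebra. Alternatively, one could use the symmetric algebra $S(\mathfrak{sl}_2)$ with its Lie--Poisson bracket and $d=0$, where $\mathfrak{sl}_2$ sits as the degree-one part.

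\emph{Expected obstacle.} The only real obstacle is confirming that the definition of a differential Poisson algebra in \cite{Kolesnikov-Sartayev-Orazgaliev} allows $d=0$, or more generally any derivation. If a nonzero $d$ were required, I would instead take $P=\mathbb C[x,y,t]$ with bracket $\{x,y\}=1$ and $t$ central, and $d=\partial_t$. This $d$ is a derivation of both products because the bracket does not involve $t$. The same quadratic polynomials in $x,y$ are killed by $d$, so again $u\circ v=u\cdot d(v)=0$ on their span. Combining the two cases proves the theorem, and hence Conjecture~\ref{conj:special-GD} in dimension 3.
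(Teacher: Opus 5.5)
Your proposal is correct and follows the paper's proof. The classes other than $\mathbf{P19}$ go through Theorem~\ref{thm:three-dimensional-realization} together with \cite{Kolesnikov}, and $\mathbf{P19}$ (zero Novikov product, Lie algebra $\mathfrak{sl}_2(\mathbb C)$) is treated separately. The paper handles $\mathbf{P19}$ by citing \cite{Kolesnikov-Sartayev} for the fact that any Lie algebra with zero Novikov product is special. You instead verify this directly by embedding $\mathfrak{sl}_2(\mathbb C)$ into $\mathbb C[x,y]$ with $\{x,y\}=1$ and $d=0$, which is essentially the argument behind the cited fact. Your computation checks out: $-xy,\ \tfrac12x^2,\ -\tfrac12y^2$ form a standard $\mathfrak{sl}_2$-triple.
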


\begin{proof}
Let $(A,\cdot,[\,,])$ be a 3-dimensional complex transposed Poisson algebra, regarded as a Gelfand--Dorfman algebra with $x\circ y=x\cdot y$.  If $A\not\cong\mathbf P19$, then Theorem~\ref{thm:three-dimensional-realization} shows that $A$ can be obtained from a Novikov--Poisson algebra.  Hence $A$ is special as a
Gelfand--Dorfman algebra.

It remains to consider $\mathbf P19$.  Its commutative product, and hence its Gelfand--Dorfman Novikov product, is zero, while its underlying Lie algebra is $\mathfrak{sl}_2(\mathbb C)$.  Every Lie algebra equipped with
the zero Novikov product is a special Gelfand--Dorfman algebra \cite{Kolesnikov-Sartayev}.  Therefore, $\mathbf P19$ is special, which completes the proof.
\end{proof}

\begin{rem}

Sartayev's conjecture holds in dimensions 2 and 3. We have also examined some 4-dimensional cases and found that all the transposed Poisson algebras considered are special as Gelfand--Dorfman algebras. We therefore believe that the conjecture holds in arbitrary dimension.

\end{rem}


\begin{thebibliography}{99}



\bibitem{AbdelwahabFernandezKaygorodov2025}
H.~Abdelwahab, A.~Fern\'andez Ouaridi and I.~Kaygorodov, Degenerations of Poisson-type algebras,
\emph{Rend. Circ. Mat. Palermo (2)} \textbf{74} (2025), no.~1, Paper No.~63, 43 pp.

\bibitem{AbdelwahabKaygorodovLubkovDerived2026}
H.~Abdelwahab, I.~Kaygorodov and R.~Lubkov, The algebraic and geometric classification of derived Jordan and
bicommutative algebras,
\emph{J. Pure Appl. Algebra} \textbf{230} (2026), no.~5, Paper No.~108252.

\bibitem{AbdelwahabKaygorodovLubkovRight2026}
H.~Abdelwahab, I.~Kaygorodov and R.~Lubkov, The algebraic and geometric classification of right alternative and semi-alternative algebras,
\emph{J. Algebra} \textbf{687} (2026), 792--824.

\bibitem{Abdurasulov-Adashev-Eshmeteva}
K.~Abdurasulov, J.~Adashev and S.~Eshmeteva, Transposed Poisson structures on solvable Lie algebras with filiform
nilradical, \emph{Commun. Math.} \textbf{32} (2024), no.~3, 441--483.

\bibitem{AlvarezKaygorodov2021}
M.~A.~Alvarez and I.~Kaygorodov, The algebraic and geometric classification of nilpotent weakly associative and symmetric Leibniz algebras, \emph{J. Algebra} \textbf{588} (2021), 278--314.

\bibitem{Bai}
C.~Bai, R.~Bai, L.~Guo and Y.~Wu, Transposed Poisson algebras, Novikov--Poisson algebras and 3-Lie algebras, \emph{J. Algebra} \textbf{632} (2023), 535--566.

\bibitem{Balinskii-Novikov}
A.~A.~Balinskii and S.~P.~Novikov, Poisson brackets of hydrodynamic type, Frobenius algebras and Lie algebras, \emph{Soviet Math. Dokl.} \textbf{32} (1985), no.~1, 228--231.

\bibitem{BaoHong}
N.~Bao and Y.~Hong, Algebraic constructions for Novikov--Poisson algebras, \emph{Mathematics} \textbf{10} (2022), no.~17, Paper No.~3158, 18 pp.

\bibitem{Burde}
D.~Burde and K.~Dekimpe, Novikov structures on solvable Lie algebras, \emph{J. Geom. Phys.} \textbf{56} (2006), no.~9, 1837--1855.




\bibitem{BeitesFernandezKaygorodov}
P.~Damas Beites, A.~Fern\'andez Ouaridi and I.~Kaygorodov,
The algebraic and geometric classification of transposed Poisson algebras, \emph{Rev. R. Acad. Cienc. Exactas F\'is. Nat. Ser. A Mat. RACSAM} \textbf{117} (2023), no.~2, Paper No.~55, 25 pp.

\bibitem{BeitesStructures}
P.~Damas Beites, B.~L.~M.~Ferreira and I.~Kaygorodov, Transposed Poisson structures, \emph{Results Math.} \textbf{79} (2024), no.~2, Paper No.~93, 29 pp.

\bibitem{Dorfman1993}
I.~Dorfman, \emph{Dirac Structures and Integrability of Nonlinear Evolution Equations}, Nonlinear Science: Theory and Applications, John Wiley \& Sons, Ltd., Chichester, 1993, xii+176 pp.

\bibitem{FernandezOuaridi2024}
A.~Fern\'andez Ouaridi, On the simple transposed Poisson algebras and Jordan superalgebras, \emph{J. Algebra} \textbf{641} (2024), 173--198.

\bibitem{FernandezOuaridi2026}
A.~Fern\'andez Ouaridi, On simple transposed Poisson algebras, arXiv:2604.26115, 19 pp.

\bibitem{Ferreira}
B.~L.~M.~Ferreira, I.~Kaygorodov and V.~Lopatkin, \(\frac{1}{2}\)-derivations of Lie algebras and transposed Poisson algebras,
\emph{Rev. R. Acad. Cienc. Exactas F\'is. Nat. Ser. A Mat. RACSAM} \textbf{115} (2021), no.~3, Paper No.~142, 19 pp.

\bibitem{Gelfand-Dorfman}
I.~M.~Gel'fand and I.~Ya.~Dorfman, Hamiltonian operators and algebraic structures related to them, \emph{Funct. Anal. Appl.} \textbf{13} (1979), no.~4, 248--262.

\bibitem{Gubarev-Sartayev}
V.~Gubarev and B.~K.~Sartayev, Free special Gelfand--Dorfman algebra, \emph{J. Algebra Appl.} \textbf{23} (2024), no.~14, Paper No.~2550005, 11 pp.

\bibitem{JinHong2026}
J.~Jin and Y.~Hong, Nilpotency and Frattini theory for transposed Poisson algebras, \emph{J. Algebra Appl.} (2026), Paper No.~2850014.

\bibitem{Kaygorodov-Block}
I.~Kaygorodov and M.~Khrypchenko, Transposed Poisson structures on Block Lie algebras and superalgebras,
\emph{Linear Algebra Appl.} \textbf{656} (2023), 167--197.

\bibitem{KaygorodovKhrypchenkoLopes2020}
I.~Kaygorodov, M.~Khrypchenko and S.~A.~Lopes, The algebraic and geometric classification of nilpotent anticommutative algebras, \emph{J. Pure Appl. Algebra} \textbf{224} (2020), no.~8,
Paper No.~106337, 32 pp.

\bibitem{KaygorodovKhrypchenkoPopov2021}
I.~Kaygorodov, M.~Khrypchenko and Y.~Popov, The algebraic and geometric classification of nilpotent terminal algebras, \emph{J. Pure Appl. Algebra} \textbf{225} (2021), no.~6, Paper No.~106625, 41 pp.

\bibitem{Kaygorodov-Lopatkin-Zhang}
I.~Kaygorodov, V.~Lopatkin and Z.~Zhang, Transposed Poisson structures on Galilean and solvable Lie algebras, \emph{J. Geom. Phys.} \textbf{187} (2023),
Paper No.~104781, 13 pp.

\bibitem{Kolesnikov}
P.~S.~Kolesnikov and A.~A.~Nesterenko, Conformal envelopes of Novikov--Poisson algebras, \emph{Sib. Math. J.} \textbf{64} (2023), no.~3, 598--610.



\bibitem{Kolesnikov-Sartayev}
P.~S.~Kolesnikov and B.~K.~Sartayev, On the special identities of Gelfand--Dorfman algebras, \emph{Exp. Math.} \textbf{33} (2024), no.~1, 165--174.

\bibitem{Kolesnikov-Sartayev-Orazgaliev}
P.~S.~Kolesnikov, B.~Sartayev and A.~Orazgaliev, Gelfand--Dorfman algebras, derived identities, and the Manin product of operads, \emph{J. Algebra} \textbf{539} (2019), 260--284.

\bibitem{LaraiedhSilvestrov2021}
I.~Laraiedh and S.~Silvestrov, Transposed Hom--Poisson and Hom--pre-Lie Poisson algebras, \emph{Hacet. J. Math. Stat.} \textbf{55} (2026), no.~2, 473--501.

\bibitem{MaLi2023}
T.~Ma and B.~Li, Transposed BiHom--Poisson algebras, \emph{Comm. Algebra} \textbf{51} (2023), no.~2, 528--551.

\bibitem{Rakhimov}
I.~S.~Rakhimov, I.~M.~Rikhsiboev and W.~Basri, Complete lists of low dimensional complex associative algebras,
arXiv:0910.0932, 8 pp.

\bibitem{Sartayev}
B.~K.~Sartayev, Some generalizations of the variety of transposed Poisson algebras, \emph{Commun. Math.} \textbf{32} (2024), no.~2, 55--62.

\bibitem{SunZhuLi}
Q.~Sun, Q.~Zhu and Z.~Li, Bialgebras and related algebras for Novikov--Poisson algebras, \emph{Comm. Algebra} \textbf{53} (2025), no.~5, 2049--2065.

\bibitem{Xu1996}
X.~Xu, On simple Novikov algebras and their irreducible modules, \emph{J. Algebra} \textbf{185} (1996), no.~3, 905--934.

\bibitem{Xu1997}
X.~Xu, Novikov--Poisson algebras, \emph{J. Algebra} \textbf{190} (1997), no.~2, 253--279.

\bibitem{Xu2000}
X.~Xu, Quadratic conformal superalgebras, \emph{J. Algebra} \textbf{231} (2000), no.~1, 1--38.

\bibitem{Xu2002}
X.~Xu, Construction of Gel'fand--Dorfman bialgebras from classical \(R\)-matrices, arXiv:math/0208177, 10 pp.

\bibitem{Yuan}
L.~Yuan and Q.~Hua, \(\frac{1}{2}\)-(bi)derivations and transposed Poisson algebra structures on Lie algebras,
\emph{Linear Multilinear Algebra} \textbf{70} (2022), no.~22, 7672--7701.

\bibitem{ZakharovSimple2023}
A.~S.~Zakharov, Simple Novikov--Poisson algebras, \emph{Sib. Electron. Math. Rep.} \textbf{20} (2023), no.~2, 1396--1404.

\bibitem{Zhang-Z}
Z.~Zhang, Z.~Hao, Y.~Sun and L.~Chen, The classification of 3-dimensional transposed Poisson algebras,
Preprint (2026), ResearchGate: 404124642, 19 pp.

\bibitem{Zhang4D}
Z.~Zhang, Z.~Hao, Y.~Sun and L.~Chen, The classification of 4-dimensional transposed Poisson algebras,
Preprint (2026), ResearchGate: 400517733, 21 pp.

\bibitem{ZhangGD2026}
Z.~Zhang, Z.~Hao, Y.~Sun and L.~Chen, Gelfand--Dorfman algebras: nilpotency, solvability, construction and
classification, arXiv:2607.09672, 35 pp.

\bibitem{GDCohomology}
Z.~Zhang, Z.~Hao, Y.~Sun and L.~Chen, Cohomology, deformations and inducibility problems for Gelfand--Dorfman algebras, Preprint (2026), ResearchGate: 413601978, 22pp.


\bibitem{ZhaoBaiMeng}
Y.~Zhao, C.~Bai and D.~Meng, Some results on Novikov--Poisson algebras, \emph{Int. J. Theor. Phys.} \textbf{43} (2004), no.~2, 519--528.

\end{thebibliography}
\end{document}